\newcommand{\BB}{{\mathcal B}}
\newcommand{\EE}{{\mathcal E}}
\newcommand{\FF}{{\mathcal F}}
\newcommand{\MM}{{\mathcal M}}
\newcommand{\SM}{{\mathcal{S}}}
\newcommand{\BD}{{\mathbb D}}
\newcommand{\BM}{{\mathbb M}}
\newcommand{\BR}{{\mathbb R}}
\newcommand{\dyw}{\mbox{\rm div}}
\newcommand{\fch}{{\mathbf{1}}}
\newtheorem{theorem}{\bf Theorem}[section]
\newtheorem{proposition}[theorem]{\bf Proposition}
\newtheorem{lemma}[theorem]{\bf Lemma}
\newtheorem{corollary}[theorem]{\bf Corollary}
\theoremstyle{definition}
\newtheorem{definition}[theorem]{Definition}
\newtheorem{remark}[theorem]{Remark}
\numberwithin{equation}{section}
\begin{document}

\title {Robin problem with measure data and singular nonlinearities on the boundary}
\author {Andrzej Rozkosz\\
{\small Faculty of Mathematics and Computer Science,
Nicolaus Copernicus University} \\
{\small  Chopina 12/18, 87--100 Toru\'n, Poland}\\
e-mail: rozkosz@mat.umk.pl}
\date{}
\maketitle
\begin{abstract}
We consider the Robin problem for a uniformly elliptic divergence operator with measure data on the right-hand side of the equation and an absorption term on the boundary involving blowing up terms.
We prove the existence of a  positive renormalized solution and provide it stochastic representation, which can be viwed as a  generalized nonlinear Feynman--Kac formula. From this representation we derive some additional regularity results for the solution and some uniqueness results.
\end{abstract}
{\small \noindent{\bf Mathematics Subject Classification (2010).} Primary 35J25; Secondary 35J65, 60H30.}
\smallskip\\	
{\bf Keywords.} Elliptic equation, nonlinear Robin problem,  measure data, nonlinear Feynman--Kac formula.

\section{Introduction}
\label{sec1}

Let $D$ be a bounded Lipschitz domain in $\BR^d$, $d\ge3$. We are interested in existence, uniqueness and regularity of positive solutions of the nonlinear Robin problem (also called Fourier problem or third boundary-value problem)
\begin{equation}
\label{eq1.1}
-L u=\mu\quad\mbox{in }D,\qquad-\frac{\partial u}{\partial{\gamma_a}}
+\beta\cdot u=h\cdot g(u)\quad \mbox{on }\partial D,
\end{equation}
where  $L$ is a uniformly elliptic divergence form operator  defined by
\begin{equation}
\label{eq1.10}
L=\sum^{d}_{i,j=1}\frac{\partial}{\partial x_i}
\Big(a_{ij}(x)\frac{\partial}{\partial x_j}\Big)
\end{equation}
for some symmetric matrix-valued function $a:\bar D\rightarrow\BR^d\otimes\BR^d$ and $\gamma_a=a\cdot\mathbf{n}(x)$, where $\mathbf{n}(x)$
is the unit inward normal vector  at $x\in \partial D$.
As for the data, we assume that  $\mu$ is a smooth measure on $D$ (with respect to the capacity associated with $L$),  
$h:\partial D\rightarrow\BR$ is a Borel  measurable function such that
\begin{equation}
\label{eq1.2}
h\ge0\,\,\, \sigma\mbox{-a.e.},\quad \mu(D)+\|h\|_{L^1(\partial D;\sigma)}<\infty,\quad \mu(D)+\|h\|_{L^{1}(\partial D;\sigma)}>0,
\end{equation}
where 
$\sigma$ is the surface measure on $\partial D$,
and $\beta:\partial D\rightarrow\BR$ is a Borel measurable function such that
\begin{equation}
\label{eq1.3}
\beta\in L^{\infty}(\partial D;\sigma),\quad  \beta\ge0\quad\sigma\mbox{-a.e.},\quad \|\beta\|_{L^1(\partial D;\sigma)}>0.
\end{equation}
We assume that in (\ref{eq1.1}),  in the nonlinear term, $g:(0,\infty)\rightarrow[0,\infty)$  is a continuous function such that
\begin{equation}
\label{eq1.4}
c_1\le g(y)y^{\gamma}\le c_2,\quad y>0,
\end{equation}
for some constants $c_1,c_2,\gamma>0$. Therefore  $h\cdot g(u)$ forms a blowing up term.
In fact, we also deal with   mixed nonlinearities of the form $hg^1(u)+hg^2(u)$, where $g^1,g^1$ are continuous and for some constants $\gamma_1,\gamma_2>0$ satisfy
\begin{equation}
\label{eq1.7}
c_1\le g^1(y)y^{-\gamma_1}\le c_2,\quad c_1\le g^2(y)y^{-\gamma_2}\le c_2,\quad y>0.
\end{equation}

Problem of the form (\ref{eq1.1}) was  recently addressed in \cite{DOS}
for equation with $L^1$-data, i.e. when $\mu(dx)=f(x)\,dx$ for some $f\in L^1(D;dx)$.  Let us stress, however, that in \cite{DOS} the authors also consider equations with more general than $L$ operators of Leray--Lions type
and with the term $\beta\cdot u$ replaced by $\beta q(u)$ with some nonnegative nonlinear  $q$ (for instance, for $L$  given by (\ref{eq1.10}) it is allowed that $q$ is continuous  such that $q(0)=0$, $q(y)\ge y$, $y\ge0$). Robin problem with $L^1$ or measure data and different boundary conditions (with no blowing up terms) are  considered for instance in \cite{AMST,GO,Pr}. For recent results on Robin problem with homogeneous boundary data but nonlinear terms on the right-hand side of the equation (involving the gradient of the solution as well) we refer the reader to \cite{DP,GMM} and the refrences therein.

Our work is motivated by \cite{DOS} and is strongly related to this paper. It is also strongly related  to \cite{K:JFA} (and to some extent to \cite{BO}) although in \cite{K:JFA} the Robin problem is not mentioned and links between (\ref{eq1.1}) and the problem considered in \cite{K:JFA} are not self-evident at all.
Below we briefly describe our main results and approach to (\ref{eq1.1}) in the case where $L=\Delta$, i.e. when  $a$ is the $d$-dimensional identity matrix. Let  $\BD$ denote the classical Dirichlet form on $L^2(\bar D)$ defined by
\[
\BD(u,v)=\int_D\nabla u\cdot\nabla v\,dx,\quad u,v\in D(\BD):=H^1(D),
\]
and let $\BD^{\beta\cdot\sigma}$ be the form $\BD$ perturbed by the measure $\beta\cdot\sigma$, i.e.
\[
\BD^{\beta\cdot\sigma}(u,v)=\BD(u,v)+\int_{\partial D}uv\beta\,d\sigma, \qquad u,v\in D(\BD^{\beta\cdot\sigma}):=H^1(D)\cap L^2(\bar D;\beta\cdot\sigma).
\]
Both $\BD$ and $\BD^{\beta\cdot\sigma}$ are regular Dirichlet forms on $L^2(\bar D;dx)$.  The generator of $\BD$, which we denote by $\Delta_N$, may be called the  Laplace operator with Neumann boundary conditions. The generator of the perturbed form can be formally written as $\Delta_N-\beta\cdot\sigma$. As a matter of fact, in the present paper we regard (\ref{eq1.1}) as equation of he form
\begin{equation}
\label{eq1.5}
(-\Delta_N+\beta\cdot\sigma)u=\mu+hg(u)\cdot\sigma\quad\mbox{in }\bar D,
\end{equation}
i.e. a Schr\"odinger equation for $\Delta_N$ with potential $\beta\cdot\sigma$ and a nonliner term involving measures on the right-hand side. The reason is that properly defined weak solutions of (\ref{eq1.5}) are weak solutions of (\ref{eq1.1})
and vice versa, and similarly, renormalized solutions of (\ref{eq1.1}) are renormalized solutions of (\ref{eq1.5}). On the other hand, equation (\ref{eq1.5}) falls within the scope of semilinear equations with Dirichlet operators and smooth  measure data, and therefore can be effectively studied by probabilistic methods. In fact, in our proofs we apply or modify  methods used before in \cite{K:JFA,KR:NoD,KR:PA}. It is worth noting here that the transfer of (\ref{eq1.1}) to equation (\ref{eq1.5}) involving the  measure $\sigma$ is useful even in case  $\mu(dx)=f\,dx$ for some nonnegative $f$. A similar idea of transfering  the study of  a Robin problem to the study, by probabilistic methods, of
a related problem with measure data was used in \cite{RS}. Another examples of effective  applications of probabilistic methods to the study of Robin problems are found in \cite{DP,M}.

Our main result says that under the assumptions (\ref{eq1.2}), (\ref{eq1.3}) and (\ref{eq1.4}) or (\ref{eq1.7}) there exists a positive renormalized solution $u$ of (\ref{eq1.1}) such that $hg(u)\in L^1(\partial D;\sigma)$. If moreover $g$ is nondecreasing, then it is unique. We also give some regularity results for $u$. For instance, we prove that
for every $k>0$, $T_k(u):=(u\wedge k)\in H^1(D)$ and
\begin{equation}
\label{eq1.8}
\EE^{\kappa}(T_k(u),T_k(u))\le k(\mu(D)+\|hg(u)\|_{L^1(\partial D;\sigma)}).
\end{equation}
If, in addition, $\mu=0$, then $u^{(\gamma+1)/2}\in H^1(D)$ and for some constant $c(\gamma)>0$ we have
\begin{equation}
\label{eq1.9}
\EE^{\kappa}(u^{(1+\gamma)/2}, u^{(1+\gamma)/2})
\le c(\gamma)\|h\|_{L^1(\partial D;\sigma)}.
\end{equation}
As already mentioned, as compared with \cite{DOS},  the  novelty is that we consider general smooth bounded measures $\mu$ and not only $L^1$-data. For that reason we consider renormalized solutions and not entropy solutions as in \cite{DOS}. Furthermore, our approach allows us to treat in a unified way equations with  mixed nonlinerities of the form (\ref{eq1.7}).

A remarkable feature of our approach is that we find solutions of (\ref{eq1.1}) that are quasi continuous with respect to the capacity associated with  $\BD$ and satisfy quasi everywhere the integral equation
\begin{equation}
\label{eq1.6}
u(x)=\int_DG^{\beta\cdot\sigma}(x,y)\,\mu(dy)+\int_{\partial D}G^{\beta\cdot\sigma}(x,y)h(y)g(u(y))\,\sigma(dy),
\quad x\in\bar D,
\end{equation}
where $G^{\beta\cdot\sigma}$ is the Green function on $\bar D$ for the operator $\Delta_N-\beta\cdot\sigma$. By using the probabilistc potential theory one can show that the above equation can be written in probabilistic terms involving the process $\BM$ associated with $\BD$ (reflected Brownian motion in $\bar D$) and additive functionals of $\BM$ in the Revuz correspondence with the measures $\mu $ and $\sigma$ (see Section \ref{sec3}). This probabilistic version of (\ref{eq1.6})  may be viewed as a nonlinear, generalized Feynman--Kac formula, which in turn provides a link between (\ref{eq1.6}) and some backward stochastic differential equations (BSDEs).  In fact we  use this link and methods from the theory of BSDEs to get (\ref{eq1.6}). Once the existence of $u$ satisfying (\ref{eq1.6}) is shown,  no further work is required. From the probabilistic counterpart of (\ref{eq1.6}) and general results proved in \cite{K:JFA,KR:JFA,KR:NoD} it follows that $u$ is a renormalized solution and  satisfies (\ref{eq1.8}), (\ref{eq1.9}). From (\ref{eq1.8}) and known results proved in \cite{BBGGPV} it also follows that $u,\nabla u$  belong to  Marcinkiewicz spaces of orders $r$ determined by $d$. As another simple application of links between (\ref{eq1.6}) and BSDEs we show that if $g$ is nonincreasing, $u_n$ is the unique renormalized solution of (\ref{eq1.1}) with $\mu_n(dx)=f_n(x)\,dx$ and $f_n\rightarrow f$ in $L^1(D)$, then $u_n\rightarrow u$ quasi everywhere, where $u$ is the renormalized solution of (\ref{eq1.1}) with $\mu(dx)=f(x)\,dx$.

In fact, the results described above hold true for the form $\BD$ replaced by the form associated with $L$. An additional work is required to show that  renormalized solutions of (\ref{eq1.1}) are weak  solutions. We are able to show this is when $a$ is smooth,  $\mu(dx)=f(x)\,dx$ with nontrivial $f$ and $f\in L^p(D)$, $h\in L^q(\partial D;\sigma)$ with suitably chosen $p,q$.

\section{Preliminaries}

In the paper, $D$ is a bounded domain in $\BR^d$, $d\ge3$, with  Lipschitz  boundary,
$\bar D=D\cup\partial D$. We denote by  $\partial$ an isolated point adjoint to $\bar D$ and we adopt the convention that every function $f$ on $\bar D$ is extended to $\bar D\cup\{\partial\}$ by setting $f(\partial)=0$.
For $E\subset \bar D$,  $\BB^+(E)$ is the set of all nonnegatve Borel measurable functions on $E$.

We denote by $m$ or $dx$  the Lebesgue measure on $D$ and by $\sigma$ the surface measure on $\partial D$. For $E\subset\bar D$, a measure $\nu$ on $E$ and  a function $f:E\rightarrow\BR$  we use occasionally the notation
\[
\langle \nu,f\rangle=\int_Ef(x)\,\nu(dx)
\]
whenever the integral is well defined.

\subsection{Dirichlet forms and perturbed Dirichlet forms}

Let $I_d$ denote the $d$-dimensional identity matrix. In the paper,   $a:\bar D\rightarrow\BR^d\otimes\BR^d$ is a measurable, symmetric matrix-valued function which for some $\Lambda\ge1$ satisfies the condition
\begin{equation}
\label{eq2.7}
\Lambda^{-1} I_d\le a(\cdot)\le \Lambda I_d
\end{equation}
in the sense of nonnegatve definite matrices.
We consider the  form
\[
\EE(u,v)=\sum^d_{i,j}\int_Da_{ij}(x)\frac{\partial u}{\partial x_i}(x)
\frac{\partial v}{\partial x_j}(x)\,dx,\quad u,v\in D(\EE):=H^1(D),
\]
and for $\lambda\ge0$ we set $\EE_{\lambda}(u,v)=\EE(u,v)+\lambda(u,v)$, where $(\cdot,\cdot)$ is the standard inner product in $L^2(D;m)$ and $H^1(D)$ is the usual Sobolev space of order 1. By the results proved in \cite{FT1, FT2} (for more general domains), $\EE$ is a regular Dirichlet form on $L^2(\bar D;m)$.

It is known (see \cite{FT1,FT2} or \cite[Example 5.2.2]{FOT} for the case $a=I_d$) that $\sigma$ is a smooth measure relative to $\EE$.
To shorten notation, for $\beta\in L^{\infty}(\partial D;\sigma)$ we let stand $\kappa$ for the measure $\beta\cdot\sigma$, i.e. $\kappa$ is  defined for Borel subsets of $\bar D$ by  $\kappa(B)=\int_{B\cap\partial D}\beta(x)\,\sigma(dx)$.
We denote  by  $(\EE^{\kappa},D(\EE^{\kappa}))$ the form perturbed by $\kappa$, that is
\[
\EE^{\kappa}(u,v)=\EE(u,v)+\int_{\partial D}uv\beta\,d\sigma, \qquad u,v\in D(\EE^{\kappa}):=H^1(D)\cap L^2(\bar D;\kappa).
\]
By \cite[Lemma 6.1.1]{FOT},  $(\EE^{\kappa},D(\EE^{\kappa}))$ is a regular Dirichlet form on $L^2(\bar D;m)$. By the classical trace theorem (see, e.g., \cite[Chapter 2, Theorem 4.2]{N}) if $u\in H^1(D)$, then $u\in L^2(\bar D;\fch_{\partial D}\cdot\sigma)$, and hence $u\in L^2(\bar D;\kappa)$ if $\beta\in L^{\infty}(\partial D;\sigma)$. Thus, for essentially bounded  nonnegative $\beta$ we have $D(\EE^{\kappa})=H^1(D)$.

Suppose that $\beta$ satisfies (\ref{eq1.3}). For $u\in H^1(D)$ set $\|u\|^2_{\kappa}=\EE^{\kappa}(u,u)$. Then
\begin{equation}
\label{eq2.1}
c_1\|u\|_{H^1(D)}\le\|u\|_{\kappa}\le c_2\|u\|_{H^1(D)}
\end{equation}
for some strictly positive constants $c_1,c_2$. For the first inequality see, e.g., \cite[Appendix I, Lemma 4.1]{G}. The second inequality follows from the trace theorem.
Notice that  the first inequality in (\ref{eq2.1}) implies that the form $\EE^{\kappa}$ is transient because for
$u\in H^1(D)$ we have
\[
\int_D|u|\,dx\le (m(D))^{1/2}\|u\|_{L^2(D;m)}\le (m(D))^{1/2}\|u\|_{H^1(D)}\le c (\EE^{\kappa}(u,u))^{1/2},
\]
i.e. condition \cite[(1.5.6)]{FOT} is satisfied with the reference function equal to 1.
A probabilistic argument showing  transience of $\EE^{\kappa}$ is given right after (\ref{eq2.6}). Moreover, (\ref{eq2.1}) implies that $(\EE^{\kappa},H^1(D))$   is the same as its extended Dirichlet space.

In the present paper, we define quasi notions (capacity, quasi continuity, exceptional sets)  as in \cite[Section 2.1]{FOT} with respect to the form $(\EE,H^1(D))$. We will say that a property of points in $\bar D$ holds quasi everywhere (q.e. in abbreviation) if it holds outside some exceptional set.

We denote by $\SM$ the set of all smooth measures on $\bar D$ relative to the form $\EE$. By \cite[Lemma 6.1.2]{FOT}, $\SM=\SM^{\kappa}$, where $\SM^{\kappa}$ denotes the set of all smooth measures on $\bar D$ relative to $\EE^{\kappa}$.

In the whole paper we assume  that $\mu$ is a Borel measure on $\bar D$ such $\mu(\bar D)=\mu(D)<\infty$ and $\mu\in\SM$. By \cite[(4.4.14)]{FOT}, the form $(\EE,H^1_0(D))$ can be considered as the part of the form $(\EE,H^1(D))$ on $D$, so  from \cite[Theorem 4.4.3(ii)]{FOT} it follows that $\mu$ is a somooth measure with respect to the capacity associated with $(\EE,H^1_0(D))$. Therefore from \cite[Theorem 2.1]{BGO} (or \cite[Example 4.1]{KR:BPAN})
it follows that $\mu$ has the representation $\mu(dx)=f-\dyw(F)$ for some $f\in L^1(D;m)$ and $F=(F_1,\dots,F_d)$ with $F_i\in L^2(D;m)$, $i=1,\dots,d$.

A Radon measure $\nu$ on $\bar D$ is said to be of finite 0-energy integral relative to the form $\EE^{\kappa}$ ($\nu\in S^{(0),\kappa}_0$ in notation) if for some $c>0$,
\[
\langle \nu,v\rangle\le c \|v\|_{\kappa},\quad v\in D(\EE^{\kappa})\cap C(\bar D).
\]
By \cite[Lemma 2.2.3, Eq. (2.2.22)]{FOT}, $S^{(0),\kappa}_0\subset\SM^{\kappa}$.
Note also that $\sigma\in S^{(0),\kappa}_0$ because by the trace theorem and (\ref{eq2.1}),
\[
\int_{\partial D}v\,d\sigma\le C\|v\|_{L^2(\partial D;\sigma)}\le  C_1\|v\|_{H^1(D)}\le C_2\|v\|_{\kappa}.
\]

If $\nu\in S^{(0),\kappa}_0$, then by \cite[Theorem 2.2.5]{FOT}
there exists a function $U^{\kappa}\nu\in H^1(D)$, called the potential of $\nu$ (relative to $\EE^{\kappa}$), such that
\begin{equation}
\label{eq2.5}
\EE^{\kappa}(U^{\kappa}\nu,v)=\langle\nu,\tilde v\rangle,\quad v\in H^1(D),
\end{equation}
where $\tilde v$ is a quasi continuous version of $v$.

\subsection{Markov processes}

It is known (see \cite{FT1, FT2}) that there exists a conservative diffusion process $\BM=\{(X,(\FF_t)_{t\ge0},P_x),x\in\bar D\}$ on $\bar D$ associated with the Dirichlet form $\EE$. We denote by $E_x$ the expectation with respect to $P_x$. When $a=(1/2)I_d$, then $\BM$ is nothing else but  a reflecting standard Brownian motion (see \cite[Exercises 4.5.3, 5.2.2]{FOT}). Let  $p_t$ be the transition kernel of $\BM$, i.e. $p_t(x,B)=P_x(X_t\in B)$, $t>0,x\in\bar D$, $B\in\BB(\bar D)$, and  $(P_t)$ be the semigroup
associated with  $\BM$, i.e.  $P_tf(x)=\int_{\bar D}f(y)p_t(x,dy)=E_xf(X_t)$ for $f\in\BB_b(\bar D)$ and  $t>0$, $x\in\bar D$. It is  known (see \cite{FT1, FT2}) that  the semigropup $(P_t)$ is strong Feller and $p_t$ satisfies the following absolute continuity condition: $p_t(x,\cdot)\ll m$ for any $t>0$ and $x\in\bar D$.

For $\nu\in\SM$ let  $A^{\nu}$ denote the positive continuous additive functional (PCAF) of $\BM$ in the Revuz correspondence with $\nu$.
By \cite[Lemma 5.1, Theorem 5.1]{FT2},  the surface measure $\sigma$ belongs to the space of smooth measures in the strict sense (relative to $\EE$), and hence, by \cite[Theorem 5.1.7]{FOT}, there is  a unique PCAF $A^{\sigma}$ of $\BM$ in the strict sense with the Revuz measure $\sigma$.
In the paper, we denote by $A$  the PCAF of $\BM$ in the Revuz correspondence with $\kappa$, and by $B$ the PCAF of $\BM$ with the Revuz measure $\mu$,  i.e.
\[
A_t=A^{\kappa}_t=(\beta\cdot A^{\sigma})_t=\int^t_0\beta(X_s)\,dA^{\sigma}_s,\qquad B_t=A^{\mu}_t,\quad t\ge0.
\]
Let $\BM^A=\{(X^A,(\FF^A_t)_{t\ge0},P^A_x), x\in\bar D\}$ be the canonical subprocess of $\BM$  with respect to the multiplicative functional $(e^{-A_t})$ (see \cite[Theorem A.2.11]{FOT}). We denote by $\zeta^A$ its life time and by $E^A_x$ the expectation with respect to $P^A_x$.  The transition semigroup associated with $\BM^A$ has the form
\begin{equation}
\label{eq2.6}
P_t^Af(x)=E^Af(X^A_t)=E_x(e^{-A_t}f(X_t)), \quad f\in\BB_b(\bar D),\quad t>0,\,x\in\bar D.
\end{equation}
If $\beta$ satisfies (\ref{eq1.3}), then  $\kappa$ is nontrivial, which implies that for every $x\in\bar D$ there is $t>0$ such that   $P^A_t1(x)=E_xe^{-A_t}<1$. This and \cite[Lemma 1.6.5]{FOT} imply that the semigroup $(P^A_t)$ is transient, and hence, by \cite[Theorem 1.5.1]{FOT}, that $\EE^{\kappa}$ is transient 

We denote by $(R^A_{\alpha}f)_{\alpha>0}$ the resolvent associated with $\BM^A$ (or the semigroup $(P^A_t)$), i.e.
\[	
R^A_{\alpha}f(x)=E_x\int^{\infty}_0e^{-\alpha t}P^A_tf(x)\,dt
=E\int^{\infty}_0e^{-\alpha t-A_t}f(X_t)\,dt,\quad f\in\BB^+(\bar D),\quad x\in\bar D.
\]
We also set $Rf=R_0f$. By \cite[Exercise 6.1.1]{FOT}, $\BM^A$ satisfies the absolute continuity condition and hence, by \cite[Lemma 4.2.4]{FOT}, for every $\alpha>0$ there is a nonnegative jointly measurable symmetric function $G^{\kappa}_{\alpha}(x,y)$, $x,y\in\bar D$  which is $\alpha$-excessive (relative to $\BM^A$) in each variable and has the property that for any $f\in\BB^+(\bar D)$ and $x\in\bar D$,
\begin{equation}
\label{eq2.2}	
R^A_{\alpha}f(x)=G^{\kappa}_{\alpha}f(x),\quad\mbox{where}\quad
G^{\kappa}_{\alpha}f(x)=\int_{\bar D}G^{\kappa}_{\alpha}(x,y)f(y)\,dy
\end{equation}
Furthermore, $G^{\kappa}_{\alpha}(x,y)$ increases as $\alpha\downarrow0$. We set $G^{\kappa}(x,y)=\lim_{\alpha\downarrow0}G^{\kappa}_{\alpha}(x,y)$. The function  $G^{\kappa}$ is jointly measurable,  symmetric and $0$-excessive in each variable. By monotone convergence, letting $\alpha\downarrow0$ in (\ref{eq2.2}) we get 
\begin{equation}
\label{eq2.3}
R^Af(x)=G^{\kappa}f(x),\quad f\in\BB^+(\bar D),\quad x\in\bar D,
\end{equation}
where
\begin{equation}
\label{eq2.4}
R^Af(x)=E_x\int^{\infty}_0e^{-A_t}f(X_t)\,dt,\qquad G^{\kappa}f(x)
=\int_DG^{\kappa}(x,y)f(y)\,dy.
\end{equation}
In the terminology used in \cite{FOT},  $G^{\kappa}(x,y)$ is  called the resolvent density of  $\BM^A$ (and is denoted by $r^A(x,y)$; see \cite[Exercise 6.1.1]{FOT}).
We denote it by $G^{\kappa}(x,y)$ because in the situation considered in the present paper it is the Green function on $\bar D$ for the operator associated with $\EE^{\kappa}$.

\section{Probabilistic  and integral solutions}
\label{sec3}

In what follows we assume that $f,h, g,\beta$ are Borel measurable and nonnegative.

\subsection{Basic definitions and auxiliary results}
\begin{definition}
A function $u$ defined q.e. on $\bar D$ is a probabilistic solution of (\ref{eq1.1}) if $u$ is quasi continuous, $u>0$ q.e. in $\bar D$ and for q.e. $x\in\bar D$,
\begin{equation}
\label{eq3.2}
u(x)=E^A_x\int^{\infty}_0dB_t
+E^A_x\int^{\infty}_0h(X^A_t)g(u(X^A_t))\,dA^{\sigma}_t.
\end{equation}
\end{definition}

\begin{remark}
\label{rem3.2}
(i) Since every quasi continuous function is finite q.e. (this follows for instance from
\cite[Theorem 2.1.2]{FOT}),  (\ref{eq3.2}) means in particular that both integrals on the right-hand side  are finite for q.e. $x\in \bar D$.
\smallskip\\
(ii) $B$ can be regarded as a PCAF of $\BM^A$ with the Revuz measure $\mu$ again (see \cite[Theorem (2.22)]{FG} and also \cite[p. 331]{FOT}).
In particular, $B_t=B_{t\wedge\zeta^A}$, $t\ge0$, $P_x$-a.s. for q.e. $x\in\bar D$. Similarly, if $\nu=\mu+h\cdot\sigma$, then $A^{\nu}$ can be regarded as a PCAF of $\BM^A$ with the Revuz measure $\nu$.
\smallskip\\
(iii) Let $\nu=\mu+\sigma$, $\psi(x,y)=\fch_{D}(x)+\fch_{\partial D}h(x)g(y)$. Then (\ref{eq3.2}) can be written in the form
\[
u(x)=E^A_x\int^{\infty}_0\psi(X^A_t,u(X^A_t))\,dA^{\nu}_t
=E^A_x\int^{\zeta^A}_0\psi(X^A_t,u(X^A_t))\,dA^{\nu}_t.
\]
\end{remark}

\begin{definition}
A function $u$ defined q.e. on $\bar D$ is an integral solution of (\ref{eq1.1}) if $u$ is quasi continuous, $u>0$ q.e. in $\bar D$ and (\ref{eq1.6}) is satisfied for q.e. $x\in\bar D$.
\end{definition}

We shall see that probabilistic solutions are integral solutions in the sense that (\ref{eq1.6}) is satisfied. To see this we need a  result extending (\ref{eq2.3}). To state it, for  $\nu\in\SM$  we set
\[
G^{\kappa}\nu(x)=\int_{\bar D}G^{\kappa}(x,y)\,\,\nu(dy),\quad x\in\bar D.
\]
This notation is in agreement with our previous notation (\ref{eq2.4}).
\begin{lemma}
\label{lem3.5}
If $\nu\in S^{(0),\kappa}_0$, then  $G^{\kappa}\nu$ is an $m$-version of the potential $U^{\kappa}\nu$.
\end{lemma}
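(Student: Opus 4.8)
The plan is to prove the equality of the two functions by testing against bounded nonnegative Borel functions, using the symmetry of the Green function $G^{\kappa}$ together with the resolvent representation (\ref{eq2.3}). Fix a bounded $f\in\BB^+(\bar D)$; since $\bar D$ is bounded, $f\in L^2(\bar D;m)$. First I would use the symmetry of $G^{\kappa}(x,y)$ and Fubini's theorem to rewrite the $L^2(\bar D;m)$-pairing of $G^{\kappa}\nu$ with $f$ as
\[
(G^{\kappa}\nu,f)=\int_{\bar D}\Big(\int_{\bar D}G^{\kappa}(x,y)f(x)\,m(dx)\Big)\,\nu(dy)
=\langle\nu,G^{\kappa}f\rangle,
\]
where the inner integral equals $G^{\kappa}f(y)=R^Af(y)$ by (\ref{eq2.3}) and symmetry. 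Thus it suffices to establish the identity $\langle\nu,G^{\kappa}f\rangle=(U^{\kappa}\nu,f)$ for every such $f$, since $U^{\kappa}\nu\in H^1(D)\subset L^1(\bar D;m)$, and once the identity holds it shows $G^{\kappa}\nu\in L^1(\bar D;m)$ as well, after which the equality for all bounded nonnegative (hence all bounded) $f$ forces $G^{\kappa}\nu=U^{\kappa}\nu$ $m$-a.e.

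To produce this identity I would work with the $\alpha$-resolvent and then let $\alpha\downarrow0$. For $\alpha>0$ the function $R^A_{\alpha}f=G^{\kappa}_{\alpha}f$ belongs to $D(\EE^{\kappa})=H^1(D)$ and satisfies $\EE^{\kappa}(R^A_{\alpha}f,v)+\alpha(R^A_{\alpha}f,v)=(f,v)$ for all $v\in H^1(D)$. Moreover $R^A_{\alpha}f$ is $\alpha$-excessive (as recorded after (\ref{eq2.2})), hence it is its own quasi continuous version by \cite[Theorem 4.2.3]{FOT}. Taking $v=U^{\kappa}\nu$, and using the symmetry of $\EE^{\kappa}$ together with the defining property (\ref{eq2.5}) of the potential applied to $v=R^A_{\alpha}f$ (whose quasi continuous version is $R^A_{\alpha}f$ itself), I obtain
\[
\langle\nu,R^A_{\alpha}f\rangle+\alpha(R^A_{\alpha}f,U^{\kappa}\nu)=(f,U^{\kappa}\nu).
\]
Here the fact that $\nu\in S^{(0),\kappa}_0\subset\SM^{\kappa}$ charges no $\EE^{\kappa}$-exceptional set is what makes the pairing $\langle\nu,R^A_{\alpha}f\rangle$ legitimate.

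Finally I would pass to the limit $\alpha\downarrow0$. Since $G^{\kappa}_{\alpha}(x,y)\uparrow G^{\kappa}(x,y)$, we have $R^A_{\alpha}f\uparrow R^Af=G^{\kappa}f$ pointwise, so monotone convergence gives $\langle\nu,R^A_{\alpha}f\rangle\to\langle\nu,G^{\kappa}f\rangle$. Combined with the first paragraph this yields $\langle\nu,G^{\kappa}f\rangle=(U^{\kappa}\nu,f)$, completing the argument. The step I expect to be the main obstacle is the vanishing of the remainder $\alpha(R^A_{\alpha}f,U^{\kappa}\nu)$: because $\EE^{\kappa}$ is transient, $R^A_{\alpha}f$ increases to the finite potential $G^{\kappa}f$, so $\alpha R^A_{\alpha}f\le\alpha G^{\kappa}f\to0$ pointwise while $0\le\alpha R^A_{\alpha}f\le\|f\|_{\infty}$ (as $\alpha R^A_{\alpha}1\le1$); since $U^{\kappa}\nu\in L^1(\bar D;m)$, dominated convergence drives this term to $0$. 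Some care is also needed to confirm that the family of bounded nonnegative test functions is determining, which holds because their differences are dense in $L^2(\bar D;m)$ and both $G^{\kappa}\nu$ and $U^{\kappa}\nu$ are $m$-integrable.
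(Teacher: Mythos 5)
Your argument is correct, but it is organized differently from the paper's. The paper works at the level of the potentials themselves: it invokes \cite[Exercise 4.2.2]{FOT} to identify $G^{\kappa}_{\alpha}\nu$ with the $\alpha$-potential $U^{\kappa}_{\alpha}\nu$ $m$-a.e.\ for each $\alpha>0$, and then lets $\alpha\downarrow0$, using pointwise monotone convergence $G^{\kappa}_{\alpha}\nu\uparrow G^{\kappa}\nu$ on one side and the energy-norm convergence $\|U^{\kappa}_{\alpha}\nu-U^{\kappa}\nu\|_{\kappa}\to0$ from \cite[Lemma 2.2.11]{FOT} on the other. You instead never touch $U^{\kappa}_{\alpha}\nu$ and prove the identity in dual (weak) form: you pair everything against bounded nonnegative $f$, derive $\langle\nu,R^A_{\alpha}f\rangle+\alpha(R^A_{\alpha}f,U^{\kappa}\nu)=(f,U^{\kappa}\nu)$ from the resolvent equation, the symmetry of $\EE^{\kappa}$ and (\ref{eq2.5}), and then pass to the limit. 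What your route buys is self-containedness: it replaces the two cited facts from \cite{FOT} by the resolvent equation plus the definition of the potential, and it makes explicit where transience enters (finiteness of $G^{\kappa}f$, needed both for the monotone-convergence step and to kill the remainder $\alpha(R^A_{\alpha}f,U^{\kappa}\nu)$). The cost is that you must justify two extra points that the paper's route avoids entirely: that the pointwise resolvent $R^A_{\alpha}f$ is a quasi continuous version of the $L^2$-resolvent of $\EE^{\kappa}$ (so that (\ref{eq2.5}) may be applied to it and $\langle\nu,R^A_{\alpha}f\rangle$ is unambiguous), and that the class of bounded nonnegative test functions separates $m$-classes of the two nonnegative integrable functions $G^{\kappa}\nu$ and $U^{\kappa}\nu$. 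You address both, so the proof stands; it is simply longer than the paper's two-line citation-based argument.
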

\begin{proof}
The measure $\nu$ is of finite energy (relative to $\EE^\kappa$) in the sense defined in \cite[Section 2.2]{FOT}. For $\alpha>0$ let $U^{\kappa}_{\alpha}\nu$ denote its $\alpha$-potential. By \cite[Exercise 4.2.2]{FOT} applied to the form $\EE^{\kappa}$, for every $\alpha>0$ the function $G^{\kappa}_{\alpha}\nu$ is an $m$-version of the potential $U^{\kappa}_{\alpha}\nu$.
By monotone convergence, $G^{\kappa}_{\alpha}\nu\rightarrow G^{\kappa}\nu$ pointwise  and by \cite[Lemma 2.2.11]{FOT}, $\|U^{\kappa}_{\alpha}\nu-U^{\kappa}\nu\|_{\kappa}\rightarrow0$  as $\alpha\downarrow0$, which gives the desired result.
\end{proof}

\begin{lemma}
\label{lem3.6}
Let $\nu\in\SM$. Then $E^A_xA^{\nu}_{\infty}=G^{\kappa}\nu(x)$ for q.e. $x\in\bar D$. If, in addition, $\nu(\bar D)<\infty$, then $G^{\kappa}\nu(x)<\infty$ for q.e. $x\in\bar D$.
\end{lemma}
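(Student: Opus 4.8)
The plan is to treat the two assertions separately: first the identity $E^A_xA^\nu_\infty=G^\kappa\nu(x)$, then the finiteness under $\nu(\bar D)<\infty$.

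For the identity I would argue through the $\alpha$-order resolvent and then let $\alpha\downarrow0$, in the same way that (\ref{eq2.3}) was obtained from (\ref{eq2.2}). Since $\BM^A$ satisfies the absolute continuity condition and, by Remark~\ref{rem3.2}(ii), $A^\nu$ may be regarded as a PCAF of $\BM^A$ with the same Revuz measure $\nu$, the Revuz correspondence for $\BM^A$ (cf.\ \cite[Sect.~5.1]{FOT}) gives, for every $\alpha>0$ and q.e.\ $x$,
\[
E^A_x\int_0^\infty e^{-\alpha t}\,dA^\nu_t=\int_{\bar D}G^\kappa_\alpha(x,y)\,\nu(dy)=G^\kappa_\alpha\nu(x),
\]
where $G^\kappa_\alpha$ is the resolvent density from (\ref{eq2.2}). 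Letting $\alpha\downarrow0$ along a sequence, the left-hand side increases to $E^A_xA^\nu_\infty$ by monotone convergence (as $e^{-\alpha t}\uparrow1$ and $A^\nu$ is continuous and nondecreasing), while the right-hand side increases to $\int_{\bar D}G^\kappa(x,y)\,\nu(dy)=G^\kappa\nu(x)$ because $G^\kappa_\alpha\uparrow G^\kappa$. Collecting the exceptional sets for the countably many values of $\alpha$ into a single exceptional set yields $E^A_xA^\nu_\infty=G^\kappa\nu(x)$ for q.e.\ $x$.

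For the finiteness, assume $\nu(\bar D)<\infty$. By the symmetry of $G^\kappa$ and Tonelli,
\[
\int_DG^\kappa\nu(x)\,m(dx)=\int_{\bar D}\Big(\int_DG^\kappa(x,y)\,m(dx)\Big)\nu(dy)=\int_{\bar D}G^\kappa1(y)\,\nu(dy),
\]
where $G^\kappa1(y)=\int_DG^\kappa(y,x)\,m(dx)=R^A1(y)=E_y\int_0^\infty e^{-A_t}\,dt$ by (\ref{eq2.3})--(\ref{eq2.4}). Thus, once $G^\kappa1$ is shown to be bounded on $\bar D$, the last integral is at most $\|G^\kappa1\|_\infty\,\nu(\bar D)<\infty$, so $G^\kappa\nu<\infty$ $m$-a.e. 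Since $G^\kappa\nu$ is $0$-excessive for $\BM^A$, it is finely continuous, so $\{G^\kappa\nu=\infty\}$ is finely closed; being $m$-null and $\BM^A$ satisfying the absolute continuity condition, this set is exceptional, and hence $G^\kappa\nu<\infty$ q.e.

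The main obstacle is therefore the boundedness of $G^\kappa1=R^A1$, i.e.\ $\sup_{x}E_x\int_0^\infty e^{-A_t}\,dt<\infty$. Here I would use that the constant $1$ lies in $H^1(D)$ with $\EE(1,1)=0$, so $\EE$ is recurrent by \cite{FOT}; since $\kappa$ is nontrivial by (\ref{eq1.3}), its PCAF satisfies $A_\infty=\infty$ $P_x$-a.s.\ for q.e.\ $x$, whence $P^A_t1(x)=E_xe^{-A_t}\downarrow0$ as $t\to\infty$. As $\bar D$ is compact and $x\mapsto P^A_t1(x)$ is continuous (the subprocess inherits the strong Feller property, $\kappa$ being a finite-energy smooth measure), Dini's theorem upgrades this to uniform convergence, so $\delta:=\sup_xP^A_{t_0}1(x)<1$ for some $t_0>0$. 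The Markov property then gives $\sup_xP^A_{nt_0}1(x)\le\delta^n$, and summing over the intervals $[nt_0,(n+1)t_0)$, using that $P^A_t1$ is nonincreasing in $t$, yields $\sup_xR^A1(x)\le t_0/(1-\delta)<\infty$. Alternatively, the lower bound in (\ref{eq2.1}) furnishes a spectral gap $\EE^\kappa(u,u)\ge c\|u\|^2_{L^2(D;m)}$, which together with the ultracontractivity of $(P_t)$ produces an exponentially decaying $L^\infty$-bound on $P^A_t1$ and hence the same conclusion. This uniform estimate is the only point requiring genuine work; everything else reduces to monotone convergence and Fubini.
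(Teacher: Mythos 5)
Your overall strategy is workable but genuinely different from the paper's, and one of its two routes to the key estimate fails in the generality considered here. For the identity $E^A_xA^{\nu}_{\infty}=G^{\kappa}\nu(x)$, the paper truncates $\nu$ along an $\EE^{\kappa}$-nest into measures $\nu_k\in S^{(0),\kappa}_0$, identifies $x\mapsto E^A_xA^{\nu_k}_{\infty}$ with the potential $U^{\kappa}\nu_k$ via \cite[Lemma 6.1.1]{CF} and Lemma \ref{lem3.5}, and lets $k\to\infty$; you instead fix $\alpha>0$ and let $\alpha\downarrow0$. Both are monotone-approximation arguments, but your starting identity $E^A_x\int_0^{\infty}e^{-\alpha t}\,dA^{\nu}_t=G^{\kappa}_{\alpha}\nu(x)$ for a \emph{general} smooth $\nu$ at fixed $\alpha$ is not a one-line consequence of the Revuz correspondence: the standard proofs establish it first for measures of finite energy integral and then extend by precisely the nest truncation the paper carries out, so that step is relocated rather than avoided.

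The genuine gap is in your primary argument for $\sup_xR^A1<\infty$. You invoke continuity of $x\mapsto P^A_t1(x)$ (via a claimed strong Feller property of $\BM^A$) in order to apply Dini's theorem. In this paper's setting — merely measurable, uniformly elliptic $a$ on a Lipschitz domain — that property is not available: the paper itself obtains the strong Feller property of $(P^A_t)$ only under the extra hypothesis (\ref{eq4.7}) (Lemma \ref{lem4.9}), which it verifies only for smooth $a$ via \cite{M} (Corollary \ref{cor4.11}). Moreover, the pointwise decay $P^A_t1(x)\to0$ is established only for q.e.\ $x$, so Dini would not apply directly even granting continuity. Your alternative route, however, is sound: the lower bound in (\ref{eq2.1}) gives $\EE^{\kappa}(u,u)\ge c\|u\|^2_{L^2(D;m)}$, hence $\|P^A_t\|_{L^2\to L^2}\le e^{-ct}$, and ultracontractivity of the Neumann semigroup on a bounded Lipschitz (extension) domain together with $0\le P^A_tf\le P_tf$ for $f\ge0$ upgrades this to $\sup_xP^A_t1(x)\le Ce^{-ct}$, whence $\sup_xR^A1<\infty$; combined with your Tonelli computation and the standard fact that an excessive function finite $m$-a.e.\ is finite q.e.\ under the absolute continuity condition, this proves the second assertion. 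Note that the paper sidesteps all of this: it observes that $\EE^{\kappa}$ is transient and cites \cite[Lemma 4.3, Proposition 5.13]{KR:JFA}, which give $G^{\kappa}\nu<\infty$ q.e.\ for any bounded smooth measure over a transient form. Your repaired route buys an explicit uniform bound on $R^A1$ at the cost of heat-kernel input; the paper's is shorter and stays entirely within general Dirichlet-form theory.
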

\begin{proof}
By the 0-order version of \cite[Theorem 2.2.4]{FOT}, there exists an $\EE^{\kappa}$-nest $\{F_k\}$ such that $\nu_k:=\fch_{F_k}\nu\in S^{(0),\kappa}_0$ for every $k\ge1$. From the proof of \cite[Lemma 6.1.1]{CF} (applied to $\EE^{\kappa}$) it follows that for each $k\ge1$ the function $x\mapsto E^A_xA^{\nu_k}$ is a quasi continuous version of $U^{\kappa}\nu_k$. By this and Lemma \ref{lem3.5},  $E^A_xA^{\nu_k}=G^{\kappa}\nu_k$, $k\ge1$ for q.e. $x\in\bar D$. Letting $k\rightarrow\infty$ shows the first assertion.
Since $\EE^{\kappa}$ is transient, the second assertion follows from the first  one and \cite[Lemma 4.3, Proposition 5.13]{KR:JFA}.
\end{proof}

\begin{proposition}
\label{prop3.3}	
A function $u$ is a probabilistic solution of \mbox{\rm(\ref{eq1.1})} if and only if it is an integral solution of \mbox{\rm(\ref{eq1.1})}.
\end{proposition}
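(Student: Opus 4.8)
The plan is to observe that both notions impose exactly the same qualitative requirements on $u$, namely quasi continuity together with $u>0$ q.e. on $\bar D$, so it suffices to show that for an arbitrary quasi continuous $u$ with $u>0$ q.e. the right-hand side of (\ref{eq3.2}) coincides q.e. with the right-hand side of (\ref{eq1.6}). Once this pointwise (q.e.) identity of the two representations is established, (\ref{eq3.2}) holds q.e. if and only if (\ref{eq1.6}) holds q.e., which is the assertion. I would therefore treat the two terms of (\ref{eq3.2}) separately and match each against the corresponding term of (\ref{eq1.6}) by means of Lemma \ref{lem3.6}.

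For the first term, recall that $B=A^{\mu}$, so $\int_0^\infty dB_t=A^{\mu}_\infty$. Since $\mu\in\SM$ and $\mu(\bar D)=\mu(D)<\infty$, Lemma \ref{lem3.6} applies directly and yields $E^A_xA^{\mu}_\infty=G^{\kappa}\mu(x)$ for q.e. $x\in\bar D$, with this common value finite q.e. Because $\mu(\bar D)=\mu(D)$, the quantity $G^{\kappa}\mu(x)$ equals $\int_D G^{\kappa}(x,y)\,\mu(dy)$, which is precisely the first term of (\ref{eq1.6}).

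For the second term, set $\nu=hg(u)\cdot\sigma$. I would first verify that $\nu\in\SM$: since $u$ is quasi continuous and $u>0$ q.e., and $g\colon(0,\infty)\to[0,\infty)$ is continuous while $h\in\BB^+(\partial D)$, the function $hg(u)$ is a well-defined nonnegative Borel function q.e.; as $\nu$ is absolutely continuous with respect to the smooth measure $\sigma$, a routine nest argument (using a nest for $\sigma$ together with the sets $\{hg(u)\le k\}$) shows $\nu\in\SM$. The crucial identification is then that the additive functional $t\mapsto\int_0^t h(X^A_s)g(u(X^A_s))\,dA^{\sigma}_s$ is exactly the PCAF with Revuz measure $\nu$, by the standard rule $\varphi\cdot A^{\sigma}=A^{\varphi\cdot\sigma}$ for nonnegative Borel $\varphi$ applied with $\varphi=hg(u)$, understood relative to $\BM^A$ as in Remark \ref{rem3.2}(ii). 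Applying Lemma \ref{lem3.6} to $\nu$ gives
\[
E^A_x\int_0^\infty h(X^A_t)g(u(X^A_t))\,dA^{\sigma}_t
=E^A_xA^{\nu}_\infty=G^{\kappa}\nu(x)
=\int_{\partial D}G^{\kappa}(x,y)h(y)g(u(y))\,\sigma(dy)
\]
for q.e. $x\in\bar D$, which is the second term of (\ref{eq1.6}). Adding the two identities shows that the right-hand sides of (\ref{eq3.2}) and (\ref{eq1.6}) agree q.e., and the equivalence follows.

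The main obstacle is the second step: one must justify both that $\nu=hg(u)\cdot\sigma$ is smooth and that integrating $hg(u)$ against $dA^{\sigma}$ produces exactly the PCAF $A^{\nu}$, all interpreted relative to the subprocess $\BM^A$. It is worth noting that no finiteness of $\nu(\bar D)$ is required at this stage: the first assertion of Lemma \ref{lem3.6} provides the q.e. identity $E^A_xA^{\nu}_\infty=G^{\kappa}\nu(x)$ for every $\nu\in\SM$, with both sides allowed to equal $+\infty$ on a common set, so the equivalence of (\ref{eq3.2}) and (\ref{eq1.6}) holds without any a priori integrability hypothesis on $hg(u)$.
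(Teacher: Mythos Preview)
Your proposal is correct and follows essentially the same approach as the paper: both reduce the equivalence to the two q.e. identities $E^A_xB_\infty=G^\kappa\mu(x)$ and $E^A_x\int_0^\infty hg(u)(X^A_t)\,dA^\sigma_t=G^\kappa(hg(u)\cdot\sigma)(x)$, obtained from Lemma \ref{lem3.6}. The only minor difference is that for the second identity the paper truncates $\varphi=hg(u)$, applies Lemma \ref{lem3.6} to $(\varphi\wedge n)\cdot\sigma$, and passes to the limit by monotone convergence, thereby sidestepping the separate verification that $hg(u)\cdot\sigma\in\SM$ which you carry out directly.
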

\begin{proof}
Let $\varphi\in\BB^+(\bar D)$. Then for q.e. $x\in\bar D$ we have
\begin{equation}
\label{eq3.14}
E^A_xB_{\infty}=G^{\kappa}\mu(x),\qquad E^A_x\int^{\infty}_0\varphi(X^A_t)\,dA^{\sigma}_t
=G^{\kappa}(\varphi\cdot\sigma)(x).
\end{equation}
Indeed, the first equality follows from Lemma \ref{lem3.6} applied to $\nu:=\mu$. To get  the second one we apply Lemma \ref{lem3.6}  to $\nu_n=(\varphi\wedge n)\cdot\sigma$ and then pass to the limit as $n\rightarrow\infty$. Clearly (\ref{eq3.14}) implies the desired result.
\end{proof}


\begin{lemma}
\label{lem3.4}
If $\mu,h,\beta$ satisfy \mbox{\rm(\ref{eq1.2}), (\ref{eq1.3})} then  $G^{\kappa}(\mu+h\cdot\sigma)(x)>0$
for q.e. $x\in\bar D$.
\end{lemma}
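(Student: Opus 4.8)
The plan is to show that the potential $G^{\kappa}\nu$ of the nonzero finite smooth measure $\nu:=\mu+h\cdot\sigma$ is strictly positive q.e., by reading off its value along the trajectories of $\BM$ and then invoking recurrence together with irreducibility of $\EE$. First I would record that $\nu\in\SM$: indeed $\mu\in\SM$ by assumption, $\sigma\in\SM$, and since $h\ge0$ is Borel the measure $h\cdot\sigma$ is again smooth, so the sum is smooth; moreover by (\ref{eq1.2}) we have $0<\nu(\bar D)=\mu(D)+\|h\|_{L^1(\partial D;\sigma)}<\infty$, so $\nu$ is a nontrivial finite measure. By Lemma \ref{lem3.6} applied to $\nu$ we obtain, for q.e. $x\in\bar D$,
\[
G^{\kappa}\nu(x)=E^A_xA^{\nu}_{\infty}=E_x\int^{\infty}_0e^{-A_t}\,dA^{\nu}_t,
\]
the last equality being the expression of the expectation of a PCAF under the subprocess $\BM^A$, consistent with (\ref{eq2.3})--(\ref{eq2.4}). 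Thus it suffices to prove that the right-hand side is strictly positive for q.e. $x$.

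The next step is an elementary pointwise positivity argument. Since $\beta\in L^{\infty}(\partial D;\sigma)$ and $A^{\sigma}$ is a PCAF, we have $A_t=\int^t_0\beta(X_s)\,dA^{\sigma}_s\le\|\beta\|_{L^{\infty}(\partial D;\sigma)}A^{\sigma}_t<\infty$ for every finite $t$, whence $e^{-A_t}>0$ for all $t\ge0$. As $A$ is nondecreasing, on any path for which $A^{\nu}_{t_0}>0$ for some finite $t_0$ we get $\int^{\infty}_0e^{-A_t}\,dA^{\nu}_t\ge e^{-A_{t_0}}A^{\nu}_{t_0}>0$. Consequently
\[
\Big\{\int^{\infty}_0e^{-A_t}\,dA^{\nu}_t>0\Big\}=\{A^{\nu}_{\infty}>0\}\qquad P_x\text{-a.s.},
\]
so the problem reduces to showing that $P_x(A^{\nu}_{\infty}>0)>0$ for q.e. $x$.

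For this I would use that the unperturbed form $\EE$ is recurrent and irreducible. Recurrence follows from \cite[Theorem 1.6.3]{FOT}: the constant function $1$ lies in $H^1(D)$ and $\EE(1,1)=0$, so $1$ belongs to the extended Dirichlet space and has zero energy. Irreducibility follows from connectedness of $D$ together with the uniform ellipticity (\ref{eq2.7}), since a function in $H^1(D)$ with vanishing gradient is constant on the connected set $D$. Let $F$ be the quasi-support of $\nu$. Because $\nu$ is smooth and nontrivial it cannot charge an exceptional set, so $F$ has positive capacity. By the standard hitting properties of irreducible recurrent Dirichlet forms \cite{FOT}, $P_x(\sigma_F<\infty)=1$ for q.e. $x$, where $\sigma_F$ is the hitting time of $F$. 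Since the fine support of the PCAF $A^{\nu}$ coincides q.e. with $F$, the process charges $\nu$ after reaching $F$, and recurrence then yields $A^{\nu}_{\infty}=\infty$, in particular $A^{\nu}_{\infty}>0$, $P_x$-a.s. for q.e. $x$. Together with the previous step this gives $G^{\kappa}\nu(x)>0$ for q.e. $x$.

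The main obstacle is the last implication, namely converting the fact that $\BM$ reaches the quasi-support $F$ into the strict increase $A^{\nu}_{\infty}>0$; this is where I rely on the identification of the fine support of $A^{\nu}$ with the quasi-support of its Revuz measure $\nu$, and on the regularity of q.e. point of $F$ for $A^{\nu}$. I would also emphasize that one must argue in terms of capacity rather than the reference measure $m$: the part $h\cdot\sigma$ of $\nu$ is carried by $\partial D$, which is $m$-null but of positive capacity, so a measure-theoretic irreducibility argument would miss it, whereas the capacitary hitting theorem for recurrent forms applies to $F$ directly.
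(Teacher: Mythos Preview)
Your argument is correct and follows the same overall strategy as the paper: show that the unperturbed process $\BM$ is irreducible (and recurrent), deduce that $A^{\nu}_{\infty}>0$ $P_x$-a.s.\ for q.e.\ $x$, and then transfer this positivity to $E^A_xA^{\nu}_{\infty}=G^{\kappa}\nu(x)$. The differences lie in the execution. The paper obtains irreducibility from the strong Feller property via \cite[Exercise~4.6.3]{FOT}, then packages the recurrence/positivity step by citing \cite[Theorem~4.7.1]{FOT} together with condition~$(\EE.7)$ and \cite[Remark~3.5]{K:JFA} to get $E_xA^{\nu}_{\infty}>0$; the passage to $\BM^A$ is handled abstractly by noting (Remark~\ref{rem3.2}(ii)) that $\nu$ is again the Revuz measure of $A^{\nu}$ as a PCAF of $\BM^A$. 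Your route is more self-contained: you derive irreducibility from connectedness and uniform ellipticity, argue directly via the identification of the fine support of $A^{\nu}$ with the quasi-support of $\nu$ (this is \cite[Theorem~5.1.5]{FOT}) combined with the recurrent hitting theorem, and you handle the transfer to $\BM^A$ by the elementary pathwise observation that $A_t\le\|\beta\|_{\infty}A^{\sigma}_t<\infty$, hence $e^{-A_t}>0$ for every finite $t$. Your version avoids the external dependence on \cite{K:JFA} at the price of spelling out the support argument; both rest on the same irreducible--recurrent mechanism.
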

\begin{proof}
Write $\nu=\mu+h\cdot\sigma$. Since $\BM$ is strong Feller, it is irreducible by \cite[Exercise 4.6.3]{FOT}. This and  \cite[Theorem 4.7.1]{FOT} imply that  condition $(\EE.7)$ from \cite{K:JFA} is satisfied. Hence,  by  \cite[Remark 3.5]{K:JFA}, $E_xA^{\nu}_{\infty}>0$ for q.e. $x\in\bar D$.
From Lemma \ref{lem3.6} it follows now that  $E^A_xA^{\nu}_{\infty}>0$ for q.e. $x\in\bar D$ because by Remark \ref{rem3.2}(ii), $\nu$ is the  Revuz measure of $A^{\nu}$ considered as the PCAF of $\BM^A$.
\end{proof}

Probabilistic solutions of (\ref{eq1.1}) are closely related to the backward stochastic differential equations (BSDEs) of the form
\begin{align}
\label{eq3.4}
Y^x_t&=Y^x_{T\wedge\zeta^A}+\int^{T\wedge\zeta^A}_{t\wedge\zeta^A}dB_s\nonumber\\
&\quad+\int^{T\wedge\zeta^A}_{t\wedge\zeta^A}h(X^A_s)g(Y^x_s)\,dA^{\sigma}_s
-\int^{T\wedge\zeta^A}_{t\wedge\zeta^A}dM^x_s,\quad t\in[0,T].
\end{align}

We say that a c\`adl\`ag $(\FF^A_t)$-adapted process $Y$ is of class D under $P^A_x$ if
the collection $\{Y_{\tau}:\tau\mbox{ is a finite $(\FF^A_t)$-stopping time}\}$   is uniformly integrable under $P^A_x$.

\begin{definition}
Let $x\in\bar D$. We say that a pair $(Y^x,M^x)$ is a solution of (\ref{eq3.4}) if
\begin{enumerate}[(a)]
\item $Y^x$ is an $(\FF^A_t)$-progressively measurable c\`adl\`ag process, $Y^x$ is of class D under $P^A_x$ and $M^x$ is a c\`adl\`ag $(\FF_t)$-local martingale under $P^A_x$;
	
\item $E^A_x\int^{T\wedge\zeta^A}_0dB_s
+E^A_x\int^{T\wedge\zeta^A}_0h(X^A_t)g(Y^x_t)\,dA^{\sigma}_t<\infty$ $P^A_x$-a.s. for every $T>0$;

\item $Y^x_{T\wedge\zeta^A}\rightarrow0$ $P^A_x$-a.s. as $T\rightarrow\infty$ and for every $T>$ equation (\ref{eq3.4}) is satisfied $P^A_x$-a.s.
\end{enumerate}
\end{definition}

\begin{proposition}
\label{prop3.7}
Let  $u$ be a probabilistic solution of \mbox{\rm(\ref{eq1.1})}. Then there exists a process $M$ such that $M$ is a martingale under $P^A_x$ for q.e. $x\in\bar D$ and for q.e. $x\in\bar D$ the pair $(Y_t=u(X^A_t),M_t)$, $t\ge0$, is a solution of \mbox{\rm(\ref{eq3.4})} under $P^A_x$.
\end{proposition}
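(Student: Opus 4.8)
The plan is to construct a single path functional $M$, not depending on the starting point, and to check directly that it is a martingale under $P^A_x$ for q.e.\ $x$. Write $\nu=\mu+\sigma$ and $\psi(x,y)=\fch_D(x)+\fch_{\partial D}(x)h(x)g(y)$ as in Remark \ref{rem3.2}(iii), and set $\xi=\int_0^{\zeta^A}\psi(X^A_s,u(X^A_s))\,dA^{\nu}_s$. Since $u$ is a probabilistic solution, Remark \ref{rem3.2}(iii) gives $u(x)=E^A_x\xi$, which is finite for q.e.\ $x$ by Remark \ref{rem3.2}(i). The natural candidate is
\[
M_t=u(X^A_t)+\int_0^t\psi(X^A_s,u(X^A_s))\,dA^{\nu}_s,\qquad t\ge0,
\]
which does not depend on $x$. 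Because $\mu$ is carried by $D$ and $\sigma$ by $\partial D$, one has $\fch_D\cdot A^{\nu}=B$ and $\fch_{\partial D}\cdot A^{\nu}=A^{\sigma}$, so $\int_0^t\psi\,dA^{\nu}=B_t+\int_0^th(X^A_s)g(u(X^A_s))\,dA^{\sigma}_s$. Thus, once $M$ is shown to be a martingale, proving that $(Y_t=u(X^A_t),M_t)$ solves (\ref{eq3.4}) is merely a rearrangement of the definition of $M$ between $t\wedge\zeta^A$ and $T\wedge\zeta^A$.

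The second step is the martingale property. Quasi continuity of $u$ makes $t\mapsto u(X^A_t)$ c\`adl\`ag $P^A_x$-a.s.\ for q.e.\ $x$, so $M$ is c\`adl\`ag. For $0\le t\le T$, additivity of $A^{\nu}$ and the shift operator $\theta_t$ give $M_T-M_t=\bigl(u(X^A_{T-t})+\int_0^{T-t}\psi(X^A_s,u(X^A_s))\,dA^{\nu}_s\bigr)\circ\theta_t-u(X^A_t)$, whence by the Markov property $E^A_x[M_T-M_t\mid\FF^A_t]=G_{T-t}(X^A_t)-u(X^A_t)$, where $G_r(z):=P^A_ru(z)+E^A_z\int_0^r\psi(X^A_s,u(X^A_s))\,dA^{\nu}_s$. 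Splitting $\xi$ at time $r$ and applying the Markov property to the tail gives $E^A_z\int_r^{\zeta^A}\psi\,dA^{\nu}=E^A_zu(X^A_r)=P^A_ru(z)$, so $G_r(z)=E^A_z\xi=u(z)$ for q.e.\ $z$. Evaluating at $z=X^A_t$ yields $E^A_x[M_T-M_t\mid\FF^A_t]=0$ for q.e.\ $x$; since $M_t=E^A_x[\xi\mid\FF^A_t]$ is closed by the integrable $\xi$, $M$ is in fact a uniformly integrable martingale.

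Third, I would check the three requirements in the definition of a solution. Condition (a): $M$ is a c\`adl\`ag martingale by the above, and $Y=u(X^A)$ is progressively measurable and c\`adl\`ag; the class D property follows from $0\le Y_\tau=E^A_x\bigl[\int_\tau^{\zeta^A}\psi\,dA^{\nu}\mid\FF^A_\tau\bigr]\le E^A_x[\xi\mid\FF^A_\tau]$, a family dominated by the uniformly integrable collection generated by the single integrable variable $\xi$. Condition (b) is immediate from $E^A_x\int_0^{\zeta^A}\psi\,dA^{\nu}=u(x)<\infty$ q.e. For the terminal condition in (c), martingale convergence gives $M_{T\wedge\zeta^A}\to M_\infty=\xi$ while $\int_0^{T\wedge\zeta^A}\psi\,dA^{\nu}\uparrow\xi$, so $Y_{T\wedge\zeta^A}=M_{T\wedge\zeta^A}-\int_0^{T\wedge\zeta^A}\psi\,dA^{\nu}\to0$ $P^A_x$-a.s., and (\ref{eq3.4}) itself is the increment identity for $M$ read between $t\wedge\zeta^A$ and $T\wedge\zeta^A$.

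The main obstacle is not any single computation but the bookkeeping of \emph{quasi everywhere} statements, since $u$ is only defined q.e.: one must guarantee that for q.e.\ starting point the process $\BM^A$ never meets the exceptional set on which either the solution identity or the pointwise value $u(X^A_s)$ could fail, so that the identity $G_r=u$ may legitimately be read along the trajectory at $z=X^A_t$. This is exactly where the absolute continuity condition for $\BM^A$ and the strict-sense PCAF results recalled above enter, together with the q.e.\ form of the Markov property used in \cite{K:JFA,KR:NoD}; once these are invoked, the interchange of conditional expectations with the additive functionals $B$ and $A^{\sigma}$ is routine.
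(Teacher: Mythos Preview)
Your argument is correct and is precisely the standard construction behind this type of result: define the candidate martingale $M_t=u(X^A_t)+\int_0^t\psi(X^A_s,u(X^A_s))\,dA^{\nu}_s$, use the Markov property together with the identity $u=E^A_\cdot\,\xi$ to show $M_t=E^A_x[\xi\mid\FF^A_t]$, and then read off the BSDE and the class D property from this closed-martingale representation. The only delicate point, which you correctly flag, is the q.e.\ bookkeeping needed to evaluate $G_r=u$ along the trajectory; this is handled by the fact that for q.e.\ $x$ the process $\BM^A$ never hits an $\EE^{\kappa}$-exceptional set (cf.\ \cite[Theorem 4.2.1]{FOT} and the discussion in \cite{KR:NoD}).

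The paper itself does not give a proof here: it simply writes ``See \cite[Remark 3.2]{KR:NoD}.'' What you have done is unpack that citation. In \cite{KR:NoD} the same construction is carried out in the general Dirichlet-form setting (quasi-continuous $u$ solving $u(x)=E^A_x\int_0^{\zeta^A}\psi\,dA^{\nu}$ gives a solution of the associated BSDE via exactly the closed martingale $E^A_x[\xi\mid\FF^A_t]$), so your proposal and the referenced argument coincide in substance. The benefit of your write-up is that it is self-contained within the present paper's notation; the benefit of the paper's one-line citation is economy, since the result is already available in the literature in the required generality.
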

\begin{proof}
See \cite[Remark 3.2]{KR:NoD}.
\end{proof}

\subsection{Existence and uniqueness results}

In what follows we first prove the existence of probabilistic  solutions of (\ref{eq1.1}) for continuous nonincreasing $g$ satisfying (\ref{eq1.4}) and then for  general continuous $g$ satisfying (\ref{eq1.4}). In the first case the solution is unique. This follows from the following comparison result.

\begin{proposition}
\label{prop3.11}
Let $\mu_1,\mu_2$ be smooth measures on $D$ and $h_1,h_2:\partial D\rightarrow\BR$, $g_1,g_2:(0,\infty)\rightarrow\BR$ be nonnegative Borel measurable functions. Assume that  $\mu_1\le\mu_2$, $h_1\le h_2$,  $g_1(y)\le g_2(y)$ for $y>0$ and either $g_1$ or $g_2$ is nonincreasing. Assume also that for  $i=1,2$ there exists a probabilistic solution $u_i$ of \mbox{\rm(\ref{eq1.1})} with $\mu,h,g$ replaced by $\mu_i,h_i,g_i$, respectively.  Then $u_1\le u_2$ q.e. in $\bar D$.
\end{proposition}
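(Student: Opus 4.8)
The plan is to translate the comparison of integral/probabilistic solutions into a comparison of solutions of the associated BSDEs and then invoke a comparison theorem for BSDEs. By Proposition \ref{prop3.7}, for $i=1,2$ and q.e.\ $x\in\bar D$ the pair $(Y^{i}_t=u_i(X^A_t),M^{i}_t)$ solves the BSDE driven by the data $(\mu_i,h_i,g_i)$; that is,
\[
Y^{i}_t=Y^{i}_{T\wedge\zeta^A}+\int^{T\wedge\zeta^A}_{t\wedge\zeta^A}dB^{i}_s
+\int^{T\wedge\zeta^A}_{t\wedge\zeta^A}h_i(X^A_s)g_i(Y^{i}_s)\,dA^{\sigma}_s
-\int^{T\wedge\zeta^A}_{t\wedge\zeta^A}dM^{i}_s,
\]
where $B^{i}=A^{\mu_i}$. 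First I would fix an $x$ in the common exceptional-free set for which both representations hold and both processes are of class D, and compare the two backward equations pathwise on $[t\wedge\zeta^A,T\wedge\zeta^A]$.

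The key structural observation is a monotonicity in the generator. Write the ``coefficient'' acting on a value $y$ at a boundary point $X^A_s$ as $f_i(s,y)=h_i(X^A_s)g_i(y)$ (with the interpretation that the $dB^{i}$ term carries the interior data $\mu_i$). Assuming without loss of generality that $g_1$ is the nonincreasing one, I would establish the pointwise inequality
\[
h_1(X^A_s)g_1(Y^{2}_s)\le h_2(X^A_s)g_2(Y^{2}_s),
\]
using $h_1\le h_2$ and $g_1\le g_2$ together with nonnegativity. This puts the system in the standard form for a comparison theorem: at the reference solution $Y^{2}$, the generator of equation~1 dominates below the generator of equation~2, while the terminal/boundary data satisfy $\mu_1\le\mu_2$, and the driver of equation~1 is monotone (nonincreasing in $y$ from $g_1$). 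The conclusion $Y^{1}_t\le Y^{2}_t$ for all $t$, $P^A_x$-a.s., then follows from the comparison principle for BSDEs with the monotone/one-sided-Lipschitz structure; evaluating at $t=0$ gives $u_1(x)\le u_2(x)$ q.e.

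The main obstacle is not the algebra but the analytic framework in which the comparison theorem must be applied: the drivers here are \emph{singular} (by \eqref{eq1.4} the map $y\mapsto g_i(y)$ blows up as $y\downarrow0$), the equation is posed on the random, possibly infinite horizon $\zeta^A$ with only a class-D integrability condition rather than square-integrability, and the noise enters through additive functionals $A^{\sigma},A^{\mu_i}$ rather than a Brownian filtration. Consequently I would not prove a comparison theorem from scratch but would invoke the comparison results already developed for BSDEs of this Dirichlet-form type in \cite{K:JFA,KR:NoD}, checking that their hypotheses (the one-sided monotonicity furnished by the nonincreasing $g$, the class-D property from the definition of solution, and the finiteness $G^{\kappa}(\mu_i+h_i\cdot\sigma)<\infty$ guaranteed q.e.\ by Lemma \ref{lem3.6}) are met. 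The delicate point to verify carefully is the behaviour near $\{u=0\}$: since both solutions are strictly positive q.e.\ by definition, the singularity of $g_i$ is not actually encountered along the trajectories up to the q.e.-exceptional set, so the monotone comparison applies on the region where the generators are finite, and a standard localization/stopping argument passes to the limit $T\to\infty$ using $Y^{i}_{T\wedge\zeta^A}\to0$.
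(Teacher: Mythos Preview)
Your plan and the paper's proof start the same way---reduce to the BSDEs via Proposition~\ref{prop3.7}---but then diverge in how the comparison is executed. The paper does not invoke an external comparison theorem: it applies the It\^o--Tanaka formula directly to $(Y^1-Y^2)^+$ and obtains, after taking expectations and letting $T\to\infty$,
\[
(u_1-u_2)^+(x)\le E^A_x\int_0^{\infty}\fch_{\{u_1>u_2\}}(X^A_s)\bigl(h_1g_1(u_1)-h_2g_2(u_2)\bigr)(X^A_s)\,dA^{\sigma}_s,
\]
and then checks by elementary algebra (splitting $h_1g_1(u_1)-h_2g_2(u_2)$ into two telescoping differences) that the integrand is $\le0$ on $\{u_1>u_2\}$ when $g_2$ (or $g_1$) is nonincreasing. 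This yields $(u_1-u_2)^+=0$ q.e.\ in one stroke.

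Your route is not wrong---It\^o--Tanaka is precisely the engine behind the abstract comparison theorems you propose to cite---but it is less self-contained, and your concern about localizing near $\{u=0\}$ and passing to the limit $T\to\infty$ is in fact unnecessary: in the direct argument the drivers $g_i$ are only ever evaluated at $u_i(X^A_s)$, which is strictly positive q.e.\ by the definition of probabilistic solution, so the singularity never appears. The paper's version also avoids having to match the present setting (class-D solutions, additive-functional drivers, random horizon $\zeta^A$) against the exact hypotheses of a theorem in \cite{K:JFA,KR:NoD}; you would need to do that matching carefully, and it is not obvious a ready-made statement there covers this case without modification.
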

\begin{proof}
Suppose that $g_2$ is nonincreasing. By Proposition \ref{prop3.7}, there exist processes $M^1,M^2$ such that for q.e. $x\in\bar D$ they are martingales under $P^A_x$ and for $i=1,2$ the pair
$(Y^i=u_i(X^A),M^i)$ is a solution of (\ref{eq3.4}) with $\mu,h,g$ replaced by $\mu_i,h_i,g_i$. Hence
\[
Y^i_t=Y^i_0-B^i_t-\int^{t}_0h_i(X^A_s)g_i(Y^i_s)\,dA^{\sigma}_s
+\int^t_0dM^i_s,\quad t\ge0,
\]
for $i=1,2$. By the  It\^o--Tanaka formula (see, e.g., \cite[Theorem IV.68]{P}),
\[
(Y^1_t-Y^2_t)^+-(Y^1_0-Y^2_0)^+\ge\int^t_0\fch_{\{Y^1_s-Y^2_s>0\}}\,d(Y^1_s-Y^2_s),\quad t\ge0.
\]
Since $B^1_t-B^2_t=A^{\mu_1-\mu_2}_t\le0$, $t\ge0$, it follows from the above inequality that for q.e. $x\in\bar D$,
\begin{equation}
\label{eq3.10}
(u_1-u_2)^+(x)\le E^A_x\int^{\infty}_0\fch_{\{u_1-u_2>0\}}(X^A_s)
(h_1g_1(u_1)-h_2g_2(u_2))(X^A_s)\,dA^{\sigma}_s.
\end{equation}
Since $h_1g_1(u_1)-h_2g_2(u_2)=(h_1g_1(u_1)-h_1g_1(u_1))+(h_1g_2(u_1)-h_2g_2(u_2))\le0$ if $u_1>u_2$, the righ-hand side of (\ref{eq3.10}) is nonpositive. Hence  $(u_1-u_2)^+=0$ q.e. which implies that $u_1\le u_2$ q.e. A similar argument applies in case $g_1$ is nonincreasing.
\end{proof}

For $k>0$ we define the  truncation operator by
\[
T_k(s)=\max(-k,\min(s,k)),\quad s\in\BR.
\]

\begin{theorem}
\label{th3.4}
Assume that \mbox{\rm(\ref{eq1.2})--(\ref{eq1.4})} are satisfied, and moreover $g$ is continuous and nonincreasing.  Then there exists a unique probabilistic  solution $u$ of \mbox{\rm(\ref{eq1.1})}.
\end{theorem}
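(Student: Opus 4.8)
I would split the argument into the (easy) uniqueness part and the (substantial) existence part. Uniqueness is immediate from Proposition \ref{prop3.11}: if $u_1,u_2$ are two probabilistic solutions, apply that proposition with $\mu_1=\mu_2=\mu$, $h_1=h_2=h$, $g_1=g_2=g$. Since $g$ is nonincreasing the monotonicity hypothesis holds in either ordering, so $u_1\le u_2$ and $u_2\le u_1$ q.e., whence $u_1=u_2$ q.e.

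For existence I would approximate the singular $g$ from below by bounded nonlinearities. Put $g_n(y)=g(y\vee\tfrac1n)$ for $y\ge0$; each $g_n$ is continuous, nonincreasing and bounded (by $g(1/n)$), still satisfies $g_n(y)y^{\gamma}\le c_2$, and $g_n\uparrow g$ pointwise on $(0,\infty)$. For a bounded nonincreasing $g_n$ the driver $(x,y)\mapsto h(x)g_n(y)$ of the BSDE (\ref{eq3.4}) is nonincreasing in $y$, hence monotone, so the BSDE theory used in \cite{KR:NoD} produces a probabilistic solution $u_n$ of (\ref{eq1.1}) with $g$ replaced by $g_n$; equivalently, by Proposition \ref{prop3.3}, $u_n=G^{\kappa}\mu+G^{\kappa}(hg_n(u_n)\cdot\sigma)$. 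Applying Proposition \ref{prop3.11} to $g_n\le g_{n+1}$ (both nonincreasing) gives $u_n\le u_{n+1}$, so $u_n\uparrow u$ q.e. for some $u$ with values in $(0,\infty]$.

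The quantitative heart is a uniform bound on the boundary charges. Since $g_n(u_n)$ is a fixed function, $u_n$ solves a linear problem, and testing it with the power $u_n^{\gamma}$ (rigorously after truncating $h$ and $u_n$) and using $\EE(u_n,u_n^{\gamma})=\tfrac{4\gamma}{(1+\gamma)^2}\EE(u_n^{(1+\gamma)/2},u_n^{(1+\gamma)/2})$ together with $g_n(z)z^{\gamma}\le c_2$ yields, when $\mu=0$, the estimate $\EE^{\kappa}(u_n^{(1+\gamma)/2},u_n^{(1+\gamma)/2})\le c(\gamma)\|h\|_{L^1(\partial D;\sigma)}$; for general bounded smooth $\mu$ one argues instead with the truncation tests leading to (\ref{eq1.8}). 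In particular $\int_{\partial D}u_n^{1+\gamma}\beta\,d\sigma\le C$ uniformly. Testing the same equation against the constant $1$ gives the identity $\int_{\partial D}u_n\beta\,d\sigma=\mu(D)+\int_{\partial D}hg_n(u_n)\,d\sigma$, so by H\"older's inequality ($\beta\in L^{\infty}$, $\sigma$ finite) we obtain $\sup_n\|hg_n(u_n)\|_{L^1(\partial D;\sigma)}\le C$. By Lemma \ref{lem3.6} this forces $u<\infty$ q.e.

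It remains to pass to the limit in $u_n=G^{\kappa}\mu+G^{\kappa}(hg_n(u_n)\cdot\sigma)$, and this is the step I expect to be the main obstacle. Because $u_n\uparrow u$ with $u>0$ q.e. and $g$ is continuous, $g_n(u_n)\to g(u)$ $\sigma$-a.e. on $\{0<u<\infty\}$, so Fatou already gives the supersolution inequality $u\ge G^{\kappa}(\mu+hg(u)\cdot\sigma)$ and, with the uniform bound above, $hg(u)\in L^1(\partial D;\sigma)$. The reverse inequality amounts to excluding a loss of mass of the charges $hg_n(u_n)\cdot\sigma$, which requires more than an $L^1$-bound. I would secure it through the domination $g_n(u_n)=g(u_n\vee\tfrac1n)\le g(u_1)$, valid q.e. on $\{u_1>0\}$ (from $u_n\ge u_1$ and monotonicity of $g$), reducing the matter to finiteness of the potential $G^{\kappa}(hg(u_1)\cdot\sigma)$, i.e. to $hg(u_1)\in L^1(\partial D;\sigma)$, which I would extract from the self-improving lower bound $u_1\ge G^{\kappa}(hg_1(u_1)\cdot\sigma)$ near $\partial D$; once that holds, dominated convergence gives equality. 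The alternative, more faithful to the probabilistic framework, is to invoke the monotone stability theorem for solutions of (\ref{eq3.4}) with uniformly $L^1$-bounded monotone drivers from \cite{KR:NoD}, which directly yields that $(u(X^A),M)$ solves the limit BSDE, hence that $u$ is a probabilistic solution. Finiteness and positivity of $u$ q.e. then follow from Lemmas \ref{lem3.6} and \ref{lem3.4} applied to the nontrivial measure $\mu+hg(u)\cdot\sigma$, which completes the proof.
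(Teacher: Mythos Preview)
Your overall strategy---uniqueness via Proposition \ref{prop3.11}, existence by monotone approximation $g_n\uparrow g$---matches the paper, and your uniqueness argument is correct. The existence part, however, has genuine gaps at both the a priori estimate and the limit passage.

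For the uniform bound when $\mu\neq 0$, testing the $u_n$-equation against $u_n^\gamma$ produces on the right the term $\langle\mu,u_n^\gamma\rangle$, which you do not control; deferring to ``truncation tests leading to (\ref{eq1.8})'' is circular, since (\ref{eq1.8}) already carries $\|hg(u)\|_{L^1}$ on its right-hand side. The paper handles this probabilistically: it applies It\^o's formula to $(Z^n)^{\gamma+1}$ with $Z^n_t=Y^n_t-\int_t^\infty dB_s$, so the $\mu$-contribution is subtracted off \emph{before} the power is taken, and then $g_n(y)y^\gamma\le c_2$ yields the pointwise estimate $(u_n-G^\kappa\mu)^{\gamma+1}(x)\le c_2(\gamma+1)\,G^\kappa(h\cdot\sigma)(x)$. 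This is what gives $u<\infty$ q.e. Note also that even a uniform bound $\sup_n\|hg_n(u_n)\|_{L^1}\le C$ would not by itself force $\sup_n G^\kappa(hg_n(u_n)\cdot\sigma)(x)<\infty$ for q.e.\ $x$, so your appeal to Lemma \ref{lem3.6} at that step does not deliver the conclusion you need.

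For the limit, the domination $g_n(u_n)\le g(u_1)$ is correct on $\{u_1>0\}$, but using it for dominated convergence in (\ref{eq1.6}) requires $G^\kappa(hg(u_1)\cdot\sigma)(x)<\infty$ q.e. You identify this obstacle but do not overcome it---the ``self-improving lower bound'' argument is not spelled out, and the alternative appeal to an abstract BSDE stability theorem is left undeveloped. The paper sidesteps the difficulty by localising: with $F_k=\{u_1\ge 1/k\}$ and $\tau_k$ the first exit time of $X^A$ from $F_k$, on $[0,\tau_k]$ one has the \emph{bounded} dominant $g_n(u_n)\le c_2k^{\gamma}$, so dominated convergence gives the stopped identity (\ref{eq3.6}); then $\tau_k\nearrow\zeta^A$ (from $u_1>0$ q.e.) and a class-D argument for $u(X^A)$---itself based on the pointwise estimate above---dispose of the boundary term $E^A_x u(X^A_{\tau_k})$. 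This stopping-time localisation, not a global dominated convergence, is the missing technical idea.
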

\begin{proof}
Uniqueness follows from Proposition \ref{prop3.11}. To prove the existence of a solution,
for $n\ge1$ we define  $g_n:\BR\rightarrow[0,\infty)$ by $g_n(y)=g(y+1/n)$ if $y>0$ and $g_n(y)=g(1/n)$ if $y\le0$.  Clearly $g_n$ is nonincreasing, continuous and bounded.
By \cite[Theorem 4.3]{KR:PA} and Remark \ref{rem3.2}(iii), there exists a quasi continuous function $u_n$ and a c\`adl\`ag $(\FF^A_t)$ -adapted process $M^n$ such that $M^n$ is a   uniformly integrable martingale under $P^A_x$ for q.e. $x\in \bar D$ and   the pair $(Y^n,M^n)$ with $Y^n=u_n(X^A)$ is the unique solution of the BSDE
\begin{equation}
\label{eq3.12}
Y^n_t=\int^{\infty}_tdB_s+\int^{\infty}_th(X^A_s)g_n(Y^n_s)\,dA^{\sigma}_s
-\int^{\infty}_tdM^n_s,\quad t\ge0.
\end{equation}
Taking $t=0$ and intergrating with respect to $P^A_x$ shows that for q.e. $x\in\bar D$ we have
\begin{equation}
\label{eq3.1}
u_n(x)=E^A_xB_{\infty}+E^A_x\int^{\infty}_0h(X^A_t)g_n(u_n(X^A_t))\,dA^{\sigma}_t.
\end{equation}
Clearly $u_n\ge0$. From (\ref{eq1.2}) it follows that in fact $u_n>0$ q.e. for all sufficiently large $n\ge1$. Indeed, if  $\mu(D)>0$, then by Lemmas \ref{lem3.6} and \ref{lem3.4} we have
$E^A_xB_{\infty}=G^{\kappa}\mu(x)>0$ for q.e. $x\in \bar D$.
Suppose now that
$\|h\|_{L^1(\partial D;\sigma)}>0$. Let $g^1_n(y)=c_1(y+1/n)^{-\gamma}$ if  $y>0$ and $g^1_n(y)=c_1(1/n)^{-\gamma}$ if $y\le0$. Since $g^1$ satisfies the same assumptions as $g$, there is a quasi continuous function $u^1_n$ such that $u^1_n$ satisfies (\ref{eq3.1}) with $g_n$ replaced by $g^1_n$. By Proposition \ref{prop3.11}, $u_n\ge u^1_n$ q.e. On the other hand,
\[
u^1_n(x)\ge E_x\int^{\infty}_0h(X^A_t)g^1_n(u^1_n(X^A_t))\,dA^{\sigma}_t
=E_x\int^{\infty}_0\varphi(X^A_t)\,dA^{\sigma}_t
\]
with $\varphi(x)=c_1h(x)(u^1_n(x)+1/n)^{-\gamma}$. By (\ref{eq3.14}) and Lemma \ref{lem3.4},
$u^1_n(x)\ge G^{\kappa}(\varphi\cdot\sigma)(x)>0$
for q.e. $x\in\bar D$.
Since  $h\ge0$ and  $g_{n}\le g_{n+1}$, it follows from  Proposition \ref{prop3.11} that $u_n\le u_{n+1}$, $n\ge1$, q.e. Set $u=\limsup_{n\rightarrow\infty}u_n$. By what has already been proved,  $u\ge u^1_n>0$ q.e.
Define $w(x)=E^A_xB_{\infty}$ and  $Z^n_t=Y^n_t-\int^{\infty}_tdB_s$. Clearly $u_n\ge w$.
By  It\^o's formula,
\begin{align*}
(Z^n_t)^{\gamma+1}-(Z^n_0)^{\gamma+1}&=(\gamma+1)\int^t_0(Z^n_s)^{\gamma}\,dZ^n_s
+\frac12(\gamma+1)\gamma\int^t_0(Z^{n}_s)^{\gamma-1}\,d[Z^n]_s\\
&\ge-(\gamma+1)\int^t_0(Z^n_s)^{\gamma}h(X^A_s)g_n(Y^n_s)\,dA^{\sigma}_s  +(\gamma+1)\int^t_0(Z^n_s)^{\gamma}\,dM^n_s.
\end{align*}
Let $(\tau_k)$ be a localizing sequence for the local martingale $t\mapsto\int^t_0(Z^n_s)^{\gamma}\,dM^n_s$. By the above inequality we have
\begin{equation}
\label{eq3.20}
E(Z^n_0)^{\gamma+1}\le E(Z^n_{t\wedge\tau_k})^{\gamma+1}
+(\gamma+1)E\int^{t\wedge\tau_k}_0(Z^n_s)^{\gamma}h(X^A_s)g_n(Y^n_s)\,dA^{\sigma}_s.
\end{equation}
By (\ref{eq1.4}), $g_n(y)y^{\gamma}=g(y+1/n)y^{\gamma}\le g(y+1/n)(y+1/n)^{\gamma}\le c_2$. Therefore
letting $k\rightarrow\infty$ and then $t\rightarrow\infty$ in (\ref{eq3.20}) and using fact that  $E_x(Z^n_t)^{\gamma+1}\rightarrow0$ we get
\begin{align}
\label{eq3.19}
(u_n-w)^{\gamma+1}(x)&\le (\gamma+1)E^A_x\int^{\zeta^A}_0(u_n-w)(X^A_s))^{\gamma}
h(X^A_s)g_n(u_n(X^A_s))\,dA^{\sigma}_s\nonumber\\
	&\le c_2(\gamma+1)E^A_x\int^{\zeta^A}_0h(X^A_s)\,dA^{\sigma}_s.
\end{align}
Since  $\mu(D)<\infty$,  $w=G^{\kappa}\mu<\infty$ q.e. in $\bar D$ by Lemma \ref{lem3.6}. Therefore from (\ref{eq3.19}) it follows that
$\sup_{n\ge1}u^{\gamma+1}_n(x)< \infty$ for q.e. $x\in\bar D$.
Let $F_k=\{u_1\ge1/k\}$ and $\tau_k=\inf\{t>0: X^A_t\notin F_k\}$.
From (\ref{eq3.1}), by  using the strong Markov property and the fact that $B,A^{\sigma}$ are additive functionals of $\BM^A$, we get
\[
u_n(x)=E^A_xu_n(X^A_{\tau_k})+E^A_xB_{\tau_k}+E^A_x\int^{\tau_k}_0
h(X^A_t)g_n(u_n(X^A_t))\,dA^{\sigma}_t
\]
for all $n,k\ge1$. For every $x\in F_k$,
\[
g(u_n(x)+1/n)\le g(u_1(x)+1/n)\le c_2u_1^{-\gamma}(x)\le c_2k^{-\gamma},
\]
so letting $n\rightarrow\infty$ and applying the dominated convergence theorem we obtain
\[
E^A_x\int^{\tau_k}_0
h(X^A_t)g_n(u_n(X^A_t))\,dA^{\sigma}_t
\rightarrow E^A_x\int^{\tau_k}_0
h(X^A_t)g(u(X^A_t))\,dA^{\sigma}_t.
\]
Hence, for each $k\ge1$,
\begin{equation}
\label{eq3.6}
u(x)=E^A_xu(X^A_{\tau_k})+E^A_xB_{\tau_k}+E^A_x\int^{\tau_k}_0
h(X^A_t)g(u(X^A_t))\,dA^{\sigma}_t.
\end{equation}
From the definition of $\tau_k$ it follows that $u_1(X^A_{\tau_k})\le1/k$
and $\tau_k\nearrow\tau$ as $k\rightarrow\infty$ for some stopping time $\tau$. Furthermore, since $u_1$ is quasi continuous, we  have
\begin{equation}
\label{eq3.9}
P^A_x(0=\lim_{k\rightarrow\infty}u_1(X^A_{\tau_k})=u_1(X^A_{\tau}),\tau<\zeta^A)
=P^A_x(\tau<\zeta^A)
\end{equation}
for q.e. $x\in\bar D$.
We know that  $u_1>0$ q.e. Let $N\subset\bar D$ be a Borel $\EE^{\kappa}$-exceptional set such that $u_1>0$ on  $\bar D\setminus N$ and let $\sigma_N=\inf\{t>0:X^A_t\in N\}$.
Since $\mbox{Cap}^{\kappa}(N)=0$, where $\mbox{Cap}^{\kappa}$ is the capacity associated with $\EE^{\kappa}$,  there is a decreasing sequence of relatively compact open sets $(U_n)$ such that $N\subset\bigcap_{n\ge1}U_n$ and $\mbox{Cap}^{\kappa}(U_n)\downarrow0$. By \cite[Theorem 4.2.1]{FOT}, $P_x(\lim_{n\rightarrow\infty}\sigma_{U_n}=\infty)=1$ for  q.e. $x\in\bar D$, so $P_x(\sigma_N=\infty)=1$. 
Hence $P^A_x(\sigma_N<\zeta^A)=0$ for q.e. $x\in\bar D$.
On the other hand, by (\ref{eq3.9}) and the definition of $N$, for
$x\in\bar D$  we have
\[
P^A_x(\tau<\zeta^A)
=P^A_x(X^A_{\tau}\in N,\tau<\zeta^A)=P^A_x(\sigma_N\le\tau,\tau<\zeta^A),
\]
so $P^A_x(\tau<\zeta^A)=0$ for q.e. $x\in\bar D$. Consequently,
$P^A_x(\lim_{k\rightarrow\infty}\tau_k\ge\zeta^A)=1$ for q.e. $x\in\bar D$.
We next show that $Y:=u(X^A)$ is of class D. Indeed, by the strong Markov property,
for a finite $(\FF^A_t)$-stopping time $\sigma$ we have $w(X^A_{\sigma})=E^A_x(B_{\infty}\circ\theta_{\sigma}|\FF^A_{\sigma})=
E^A_x(B_{\infty}-B_{\sigma}|\FF^A_{\sigma})\le E^A_x(B_{\infty}|\FF^A_{\sigma})$, which together with the fact that $w<\infty$ q.e. in $\bar D$ implies that for q.e. $x\in\bar D$ the process $w(X^A)$ is of class D under $P^A_x$. Using (\ref{eq3.19}) and the strong Markov property we also show that
\[
|(u-w)(X^A_{\sigma})|^{\gamma+1}\le c_2(\gamma+1)E^A_x\Big(\int^{\zeta^A}_0h(X^A_s)\,dA^{\sigma}_s|\FF^A_{\sigma}\Big).
\]
Hence  $\sup_{\sigma}E^A_x|u(X^A_{\sigma})|^{\gamma+1}<\infty$, which implies that
the process $(u-w)(X^A)$ is of class D under $P^A_x$.
Consequently, for q.e. $x\in\bar D$, the process $u(X^A)$ is of class D under $P^A_x$. As a result,   $E^A_xu(X^A_{\tau_k})\rightarrow0$  for q.e. $x\in\bar D$ since $u(X^A_{\tau_k})\rightarrow0$ $P_x$-a.s.
Therefore letting $k\rightarrow\infty$ in (\ref{eq3.6}) shows that  $u$ satisfies (\ref{eq3.2}), so  $u$ is a probabilistic solution of (\ref{eq1.1}).
\end{proof}

We now turn to (\ref{eq1.1}) with general continuous $g$ satisfying (\ref{eq1.4}). It is convenient to consider first the problem  with bounded $h$ and then the general case.

\begin{proposition}
\label{prop3.5}
Assume that  $\mu$ satisfies \mbox{\rm(\ref{eq1.2})}, $h$ is   bounded, $g$ is continuous and bounded. Then there exists   a probabilistic solution of  \mbox{\rm(\ref{eq1.1})}.
\end{proposition}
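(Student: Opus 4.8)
The plan is to pass to the integral formulation and solve it by approximation plus a compactness argument, since the absence of monotonicity of $g$ rules out the comparison tool (Proposition \ref{prop3.11}) used in Theorem \ref{th3.4}. By Proposition \ref{prop3.3} it suffices to produce a quasi continuous, q.e.\ positive $u$ satisfying the integral equation (\ref{eq1.6}).

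First I would approximate $g$ by a sequence of bounded Lipschitz functions $g_n:\BR\to[0,\infty)$ with $0\le g_n\le M:=\sup g$ and $g_n\to g$ locally uniformly on $(0,\infty)$ (for instance the inf-convolutions $\inf_{z>0}(g(z)+n|y-z|)$, extended below $0$ as in the proof of Theorem \ref{th3.4}). For a Lipschitz $g_n$ the generator $y\mapsto h(X^A)g_n(y)$ satisfies the one-sided Lipschitz (monotonicity) condition, so, exactly as in the proof of Theorem \ref{th3.4} via \cite[Theorem 4.3]{KR:PA}, there is a probabilistic solution $u_n$ of (\ref{eq1.1}) with $g$ replaced by $g_n$; by Proposition \ref{prop3.3} it satisfies $u_n=G^{\kappa}\mu+G^{\kappa}(hg_n(u_n)\cdot\sigma)$ q.e. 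Nonnegativity is clear, and strict positivity q.e.\ follows as in Theorem \ref{th3.4}: when $\mu\neq0$ it is immediate from Lemmas \ref{lem3.6} and \ref{lem3.4}, while the boundary contribution is handled by the sub-solution argument there.

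The key a priori bound comes from splitting $u_n=w+v_n$ with $w:=G^{\kappa}\mu$ (finite q.e.\ by Lemma \ref{lem3.6}) and $v_n:=G^{\kappa}(hg_n(u_n)\cdot\sigma)$. Since $0\le hg_n(u_n)\le K:=\|h\|_{L^{\infty}(\partial D;\sigma)}M$, the measure $hg_n(u_n)\cdot\sigma$ has bounded density with respect to $\sigma$ and hence lies in $S^{(0),\kappa}_0$, so $v_n=U^{\kappa}(hg_n(u_n)\cdot\sigma)$. Testing (\ref{eq2.5}) against $v_n$ and using the trace inequality gives, with a constant $C$ independent of $n$,
\[
\|v_n\|^2_{\kappa}=\EE^{\kappa}(v_n,v_n)=\int_{\partial D}hg_n(u_n)\tilde v_n\,d\sigma\le K\int_{\partial D}|\tilde v_n|\,d\sigma\le C\|v_n\|_{\kappa},
\]
so $\sup_n\|v_n\|_{H^1(D)}<\infty$, and moreover $0\le v_n\le KG^{\kappa}\sigma$ pointwise.

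The main obstacle, and the place where compactness enters, is the passage to the limit in the nonlinear boundary term without monotone convergence. From the uniform $H^1$-bound I extract a subsequence with $v_n\rightharpoonup v$ in $H^1(D)$; crucially, by compactness of the trace map $H^1(D)\to L^2(\partial D;\sigma)$ on the Lipschitz domain $D$, one has $v_n\to v$ in $L^2(\partial D;\sigma)$ and hence $\sigma$-a.e.\ on $\partial D$. As $w$ is quasi continuous and $\sigma$ charges no exceptional set, $w$ is finite $\sigma$-a.e., so $u_n\to u:=w+v$ $\sigma$-a.e.; continuity of $g$ together with $g_n\to g$ locally uniformly then yields $g_n(u_n)\to g(u)$ $\sigma$-a.e. (the behaviour near $\{u=0\}$ being controlled by the extension of $g_n$ and the q.e.\ positivity). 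Since $hg_n(u_n)\le K$, dominated convergence (with dominating function $KG^{\kappa}(x,\cdot)$, which is $\sigma$-integrable for q.e.\ $x$ because $G^{\kappa}\sigma(x)<\infty$) lets me pass to the limit in $v_n(x)=\int_{\partial D}G^{\kappa}(x,y)h(y)g_n(u_n(y))\,\sigma(dy)$, and identifying this with the a.e.\ limit $v$ gives $u=G^{\kappa}\mu+G^{\kappa}(hg(u)\cdot\sigma)$ q.e. Thus $u$ is quasi continuous, positive q.e.\ and satisfies (\ref{eq1.6}), so by Proposition \ref{prop3.3} it is the required probabilistic solution.
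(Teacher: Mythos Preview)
Your limit argument (the $H^1$ bound on $v_n=G^{\kappa}(hg_n(u_n)\cdot\sigma)$, compactness of the trace $H^1(D)\to L^2(\partial D;\sigma)$, and dominated convergence in the kernel representation) is a clean alternative to the paper's route, which truncates $\mu$ and $\sigma$ along a nest $\{F_k\}$, applies Schauder's fixed point theorem to $\Phi(u)=G^{\kappa}\mu_k+G^{\kappa}(hg(u)\cdot\sigma_k)$ on a bounded set in $H^1(D)$, and then passes to the limit in $k$ via the abstract compactness machinery of \cite{K:PA}. If the approximate solutions $u_n$ were in hand, your passage to the limit would work.

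The gap is in the construction of $u_n$. You invoke \cite[Theorem~4.3]{KR:PA} ``exactly as in the proof of Theorem~\ref{th3.4}'', but there the approximants are \emph{nonincreasing}, so the driver satisfies the genuine monotonicity condition $(g_n(y_1)-g_n(y_2))(y_1-y_2)\le0$. Your inf-convolutions are merely Lipschitz with constant $n$; the resulting one-sided Lipschitz bound has the wrong sign and, crucially, the constant blows up. For elliptic/infinite-horizon BSDEs associated with $\BM^A$ this is not enough: existence requires either monotonicity or that the Lipschitz constant be dominated by the spectral gap of $\EE^{\kappa}$, and the latter fails for large $n$. Nor can you fall back on a contraction argument for $u\mapsto G^{\kappa}\mu+G^{\kappa}(hg_n(u)\cdot\sigma)$, for the same reason.

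The paper sidesteps this entirely: since $g$ is already bounded and continuous, no Lipschitz regularisation of $g$ is needed, and Schauder handles the nonlinearity directly once the measures are truncated so that $G^{\kappa}\mu_k$ and $G^{\kappa}\sigma_k$ are bounded (this is what makes $\Phi$ map into a fixed ball and gives compactness in $H^1$). In your scheme the Lipschitz approximation buys nothing and creates the difficulty; if you replace Phase~1 by a Schauder argument you are essentially forced back to truncating the measures as the paper does, because without that truncation $G^{\kappa}\mu$ need not be bounded and the Schauder set-up in $H^1(D)$ breaks down.
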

\begin{proof}
Our proof is a modification of the proof of \cite[Proposition 3.7]{K:JFA}. By \cite[Lemma 6.1.2]{FOT}, the measure  $\sigma$ is smooth relative to $\EE^{\kappa}$. Since this form is transient, by the 0-order version of \cite[Theorem 2.2.4]{FOT} (see the remarks following \cite[Corollary 2.2.2]{FOT}) there exists a nest $(F_k)$ in $\bar D$ such
such that $\mu_k=\fch_{F_k}\cdot\mu$, $\sigma_k:=\fch_{F_k}\cdot\sigma\in S^{(0),\kappa}_0$ and $\|U^{\kappa}\mu_k\|_{\infty}+\|U^{\kappa}\sigma_k\|_{\infty}<\infty$  for each $k$.
By Lemma \ref{lem3.5}, $\|G^{\kappa}\mu_k\|_{\infty}+\|G^{\kappa}\sigma_k\|_{\infty}<\infty$, $k\ge1$. We will show that there exists $u_k$ such that for q.e. $x\in\bar D$ we have
\begin{equation}
\label{eq3.16}
u_k(x)=G^{\kappa}\mu_k(x)+G^{\kappa}(hg(u_k)\cdot\sigma_k)(x).
\end{equation}
To this end, we define $\Phi:H^1(D)\rightarrow H^1(D)$ by $\Phi(u)=G^{\kappa}\mu_k+G^{\kappa}(h(g(u)\cdot\sigma_k))$. Then $\|\Phi(u)\|_{\infty}\le\|G^{\kappa}\mu_k\|_{\infty}
+\|h\|_{\infty}\|g\|_{\infty}\|G^{\kappa}\sigma_k\|_{\infty}:=M$. Consider the subset $K$ of $H^1(D)$ defined by $K=\{u\in H^1(D):\|u\|_{\infty}\le M\}$. It is closed,  convex and
$\Phi:K\rightarrow K$. Since the usual norm in $H^1(D)$ is equivalent to the norm $\|\cdot\|_{\kappa}$, in much the same way as in the proof of \cite[Proposition 3.7]{K:JFA} (we apply the arguments to $\EE^{\kappa}$) we show that $\Phi$ is continuous
and that for any sequence in $\Phi(K)$  there is a subsequence converging in $H^1(D)$, i.e. $\Phi(K)$ is relatively compact in $H^1(D)$. Therefore we can apply Schauder's fixed point theorem to get a fixed point $u_k$ of $\Phi$, which is a solution of (\ref{eq3.16}).
By  (\ref{eq2.4}) and (\ref{eq3.14}),  for q.e. $x\in\bar D$,
\begin{equation}
\label{eq3.11}
u_k(x)=E^A\int^{\infty}_0\fch_{F_k}(X^A_t)\,dB_t
+E^A_x\int^{\infty}_0g(u_k(X^A_t))\fch_{F_k}(X^A_t)\,dA^{\sigma}_t.
\end{equation}
Since $\{F_k\}$ is a nest, $\fch_{F_k}(X^A_t)\rightarrow1$, $t\in[0,\zeta^A)$, $P_x$-a.s. for q.e $x\in \bar D$. Furthermore, by the argument at the and of the proof of \cite[Proposition 3.7]{K:JFA}, there is a subsequence, still denoted by $(u_k)$, such that $(u_k)$ converges q.e. to some $u$. Letting $k\rightarrow\infty$ in (\ref{eq3.11}) and
using the Lebesgue dominated convergence theorem shows that $u$ is an integral solution of (\ref{eq1.1}) and hence a probabilistic solution by Proposition \ref{prop3.3}.
\end{proof}

\begin{theorem}
\label{th3.6}
Assume that \mbox{\rm(\ref{eq1.2})--(\ref{eq1.4})} are satisfied and $g$ is continuous. Then there exists a probabilistic solution $u$ of \mbox{\rm(\ref{eq1.1})} such that $hg(u)\in L^1(\partial D;\sigma)$.
\end{theorem}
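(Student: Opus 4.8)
The plan is to construct $u$ as a limit of probabilistic solutions of problems with bounded data, to which Proposition~\ref{prop3.5} applies; the whole difficulty will lie in a uniform estimate preventing the boundary terms $h_ng_n(u_n)$ from concentrating where $u_n$ is close to $0$.

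First I would set $h_n=h\wedge n$ and, for $y>0$, $g_n(y)=\min(g(y),n)$, extended by $g_n(y)=n$ for $y\le0$; each $g_n$ is then continuous and bounded, each $h_n$ is bounded and nonnegative, and (enlarging $n$ if necessary) $\mu(D)+\|h_n\|_{L^1(\partial D;\sigma)}>0$. By Proposition~\ref{prop3.5} there is a probabilistic solution $u_n$ of (\ref{eq1.1}) with $h,g$ replaced by $h_n,g_n$, and by Proposition~\ref{prop3.3},
\[
u_n=G^{\kappa}\mu+G^{\kappa}\big(h_ng_n(u_n)\cdot\sigma\big)\quad\text{q.e.}
\]
Since $\min(g(y),n)y^{\gamma}\le g(y)y^{\gamma}\le c_2$, the It\^o computation of Theorem~\ref{th3.4} leading to (\ref{eq3.19}) applies verbatim and gives $(u_n-w)^{\gamma+1}\le c_2(\gamma+1)G^{\kappa}(h\cdot\sigma)$ with $w=G^{\kappa}\mu$, so $u_n$ is bounded above by the fixed, q.e. finite function $w+[c_2(\gamma+1)G^{\kappa}(h\cdot\sigma)]^{1/(\gamma+1)}$. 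For a uniform lower bound I would compare, via Proposition~\ref{prop3.11}, with the solution $\underline u_n$ of the problem whose nonincreasing nonlinearity is $\underline g_n(y)=\min(c_1y^{-\gamma},n)\le g_n$; as $\underline g_n$ and $h_n$ increase with $n$, the sequence $\underline u_n$ is nondecreasing, so $u_n\ge\underline u_n\ge\underline u_1=:v$, and $v>0$ q.e. by the argument (based on Lemma~\ref{lem3.4}) already used for $u^1_n$ in the proof of Theorem~\ref{th3.4}.

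The main step is a no-concentration estimate for the boundary terms. Write $\tilde u_n=u_n-G^{\kappa}\mu=U^{\kappa}(h_ng_n(u_n)\cdot\sigma)\in H^1(D)$, so that $\EE^{\kappa}(\tilde u_n,v)=\int_{\partial D}h_ng_n(u_n)v\,d\sigma$ for bounded $v\in H^1(D)$. For $\delta>0$ I test this identity with the nonincreasing function $\psi_{\delta}(\tilde u_n)$, where $\psi_{\delta}(s)=(1-s/\delta)^+$. Because $\EE(\tilde u_n,\psi_{\delta}(\tilde u_n))\le0$ (the integrand being $-\delta^{-1}a\nabla\tilde u_n\cdot\nabla\tilde u_n$ on $\{0<\tilde u_n<\delta\}$) and $0\le\tilde u_n\psi_{\delta}(\tilde u_n)\le\delta$, I obtain
\[
\int_{\partial D}h_ng_n(u_n)\psi_{\delta}(\tilde u_n)\,d\sigma\le\int_{\partial D}\tilde u_n\psi_{\delta}(\tilde u_n)\beta\,d\sigma\le\delta\|\beta\|_{L^1(\partial D;\sigma)}.
\]
Since $\psi_{\delta}(\tilde u_n)\ge\tfrac12$ on $\{\tilde u_n\le\delta/2\}\supseteq\{u_n\le\delta/2\}$ (as $G^{\kappa}\mu\ge0$), this gives $\int_{\{u_n\le\delta/2\}}h_ng_n(u_n)\,d\sigma\le2\delta\|\beta\|_{L^1(\partial D;\sigma)}$, uniformly in $n$. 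On $\{u_n>\delta/2\}$ we have $g_n(u_n)\le c_2(\delta/2)^{-\gamma}$, hence $h_ng_n(u_n)\fch_{\{u_n>\delta/2\}}\le c_2(\delta/2)^{-\gamma}h$. A fixed $\delta$ then bounds $\sup_n\|h_ng_n(u_n)\|_{L^1(\partial D;\sigma)}$, and letting $\delta\to0$, together with $h\in L^1(\partial D;\sigma)$, yields equi-integrability of $\{h_ng_n(u_n)\}$ in $L^1(\partial D;\sigma)$. I expect this estimate to be the crux, since it is exactly what replaces the dominated convergence available in the bounded setting of Proposition~\ref{prop3.5}.

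Finally, arguing as at the end of the proof of Proposition~\ref{prop3.5} (that is, as in \cite[Proposition 3.7]{K:JFA}), the uniform bounds and the potential representation of $u_n$ give a subsequence with $u_n\to u$ q.e., hence $\sigma$-a.e. on $\partial D$ because $\sigma$ charges no exceptional set. As $u\ge v>0$ and $g$ is continuous, $g_n(u_n)\to g(u)$ $\sigma$-a.e., and Vitali's theorem (using the equi-integrability just proved) gives $h_ng_n(u_n)\to hg(u)$ in $L^1(\partial D;\sigma)$; in particular $hg(u)\in L^1(\partial D;\sigma)$ and $G^{\kappa}(h_ng_n(u_n)\cdot\sigma)\to G^{\kappa}(hg(u)\cdot\sigma)$ q.e. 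Passing to the limit in the displayed identity for $u_n$ shows that $u$ satisfies (\ref{eq1.6}) with $u>0$ q.e., i.e. $u$ is an integral solution, hence by Proposition~\ref{prop3.3} a probabilistic solution of (\ref{eq1.1}) with $hg(u)\in L^1(\partial D;\sigma)$, as required.
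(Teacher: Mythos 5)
Your overall skeleton coincides with the paper's: approximate by bounded data via Proposition \ref{prop3.5}, sandwich $u_n$ between solutions of the model problems with nonlinearities $c_1y^{-\gamma}$ and $c_2y^{-\gamma}$ via Proposition \ref{prop3.11}, establish uniform integrability of the boundary terms, and pass to the limit. Where you genuinely diverge is the uniform integrability estimate: testing $\EE^{\kappa}(\tilde u_n,\cdot)$ against $\psi_{\delta}(\tilde u_n)$ is a purely variational no-concentration argument in the spirit of \cite{BBGGPV,DOS}, whereas the paper obtains the $L^1(\partial D;\sigma)$ bound probabilistically, by applying the It\^o--Tanaka formula to $(Y^{n,1}_t-1)^-$ and then computing $\langle\nu_n,1\rangle=\langle\kappa,G^{\kappa}\nu_n\rangle\le\|\beta\|_{L^1(\partial D;\sigma)}$. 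Your estimate is correct as stated (note $h_ng_n(u_n)\cdot\sigma\in S^{(0),\kappa}_0$ because its density is bounded, so $\tilde u_n=U^{\kappa}(h_ng_n(u_n)\cdot\sigma)\in H^1(D)$ and (\ref{eq2.5}) applies), it is quantitative, and it yields spatial equi-integrability directly; this part is a legitimate and arguably more elementary alternative to the paper's route.

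The genuine gap is the compactness step. You extract a q.e.\ convergent subsequence of $(u_n)$ ``as at the end of the proof of Proposition \ref{prop3.5}'', but that argument (Schauder's theorem plus relative compactness of $\Phi(K)$ in $H^1(D)$) rests on uniform $L^{\infty}$ and $H^1$ bounds that are available there only because the data are bounded and truncated to sets $F_k$ on which the relevant potentials are bounded; in your setting $h_ng_n(u_n)$ is controlled only in $L^1(\partial D;\sigma)$, and since $g$ is not assumed monotone the sequence $(u_n)$ is not monotone either, so no soft argument produces a q.e.\ limit. This is precisely where the paper invokes the machinery of \cite{K:PA}: it verifies condition $(\mathrm{M}_1)$, i.e.\ $\lim_{t\to0^+}\sup_{n}E^A_x\int_0^t h_n(X^A_s)g_n(u_n(X^A_s))\,dA^{\sigma}_s=0$, by first proving uniform integrability of $\{h_n(X^A)g_n(u_n(X^A))\}$ with respect to $P^A_x\otimes dA^{\sigma}$, and then applies the compactness property of $(\BM^A,[0,u^2],m)$ and \cite[Theorem 2.2]{K:PA}. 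Your spatial equi-integrability does not obviously substitute for this: a boundary measure of small total mass can still have a large potential at a given point, so the bound $\int_{\{u_n\le\delta/2\}}h_ng_n(u_n)\,d\sigma\le2\delta\|\beta\|_{L^1(\partial D;\sigma)}$ does not by itself control $E^A_x\int_0^{\infty}\fch_{\{u_n\le\delta/2\}}(h_ng_n(u_n))(X^A_s)\,dA^{\sigma}_s$ q.e., uniformly in $n$. There is also a circularity in your final step: Vitali's theorem needs $u_n\to u$ $\sigma$-a.e.\ before it can upgrade equi-integrability to $L^1(\partial D;\sigma)$ convergence, and that pointwise convergence is exactly what the missing compactness argument must supply. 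Once a q.e.\ convergent subsequence is secured, the rest of your limit passage is fine.
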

\begin{proof}
Define $h_n=T_n(h)$ and $g_n$ as in the proof of Theorem \ref{th3.4}. By Proposition \ref{prop3.5}, for every $n\ge1$ there exists a solution $u_n$	of the problem
\begin{equation}
\label{eq3.5}
-\Delta u_n=\mu\quad\mbox{in }D,\qquad-\frac{\partial u_n}{\partial{\mathbf{n}}}
+\beta\cdot u_n=h_n(x)g_n(u_n)\quad \mbox{on }\partial D.
\end{equation}
Let
$g^1(y)=c_1y^{-\gamma}$, $g^2=c_2y^{-\gamma}$, $y>0$ and
$g^1_n(y)=g^1(y+1/n)$, $g^2_n(y)=g^2(y+1/n)$, $y>0$. By Proposition \ref{prop3.5} and Theorem \ref{th3.4}, there exists  a unique probabilistic solution $u^1_n$ of (\ref{eq3.5}) with $g_n$ replaced by $g^1_n$, and similarly, there exists  a unique solution $u^2_n$ of (\ref{eq3.5}) with $g_n$ replaced by $g^2_n$. Furthermore, since $\mu(D)+\|h_n\|_{L^1(\partial D;\sigma)}>0$ for all sufficiently large $n$, we have $u^1_n>0$, $u^2_n>0$ q.e. for all suficiently large $n$. By  Proposition \ref{prop3.11} we also have $u^1_n\le u_n\le u^2_n$, $n\ge1$, q.e.
It follows in particular that
\begin{equation}
\label{eq3.7}
g(u_n+1/n)\le c_2(u_n+1/n)^{-\gamma}\le (c_2/c_1)g^1_n(u^1_n),\quad n\ge1,\quad\mbox{q.e.}
\end{equation}
Let $u^1,u^2$ be the unique probabilistic solutions of problems (\ref{eq1.1}) with $g$ replaced by $g^1$ and $g$ replaced by $g^2$, respectively.
From the proof of Theorem \ref{th3.4} it follows that $u^1_n\rightarrow u^1$ q.e. Hence
\begin{equation}
\label{eq3.8}
h_n(X^A)g^1_n(u^1_n(X^A))\rightarrow h(X^A)g^1(u^1(X^A))\quad P^A_x\otimes dA^{\sigma}\mbox{-a.s.}
\end{equation}
for q.e. $x\in \bar D$, where $P^A_x\otimes dA^{\sigma}$ is the product of the measure $P^A_x$ and the kernel $dA^{\sigma}$ from $\Omega$ to $\BB(\BR_+)$.  Moreover,
\[
u^1_n(x)=E^A_xB_{\infty}+E^A_x\int^{\infty}_0
h_n(X^A_t)g^1_n(u^1_n(X^A_t))\,dA^{\sigma}_t
\rightarrow u^1(x),
\]
which together with (\ref{eq3.8}) implies that  the family $\{h_n(X^A)g^1_n(u^1_n(X^A))\}$ is uniformly integrable  with respect to the measure $P^A_x\otimes dA^{\sigma}$ for q.e. $x\in \bar D$. By this and (\ref{eq3.7}), $\{h_n(X^A)g_n(u_n(X^A))\}$ is uniformly integrable. Consequently, for q.e. $x\in \bar D$ we have
\[
\lim_{t\rightarrow0^+}\sup_{n\ge1}
E_x\int^t_0h_n(X^A_s)g_n(u_n(X^A_s))\,dA^{\sigma}_s=0.
\]
By the Markov property,
\[
u_n(x)=E^A_xB_t+E^A_x\int^t_0h_n(X^A_s) g_n(u_n(X^A_s))\,dA^{\sigma}_s+P^A_tu_n(x),\quad t\ge0,
\]
Hence, for q.e. $x\in\bar D$,
\[
\lim_{t\rightarrow t^+}\sup_{n\ge1}|u_n(x)-P^A_tu_n(x)|=0,
\]
which means that condition ($\mbox{M}_1$) of  \cite{K:PA} is satisfied. Let $\BB_1=\{v\in\BB^+(\bar D):v(x)\le1,x\in\bar D\}$. By \cite[Proposition 2.4]{K:PA}, the triple  $(\BM^A,\BB_1,m)$ has the compactness property, so by \cite[Proposition 4.2]{K:PA}, $(\BM^A,[0,u^2],m)$ has the compactness property. Since $u_n\le u^2_n\le u^2$ for $n\ge1$, applying \cite[Theorem 2.2]{K:PA} shows that there is a subsequence (still denoted by $(u_n)$) such that $(u_n)$ converges q.e. as $n\rightarrow\infty$. As in the proof of Theorem \ref{th3.4} we show that its limit $u$ is a probabilistic solution of (\ref{eq1.1}).
We next show that $hg(u)\in L^1(\partial D;\sigma)$.
Let $Y^{n,1}_t=u^1_n(X^A_t)$, $t\ge0$. Then $Y^{n,1}$ is the first component of the solution $(Y^{n,1},M^{n,1})$ of (\ref{eq3.12}) with $g_n$ replaced by $g^1_n$ and $h$ replaced by $h_n$.  By \cite[Theorem IV.68]{P}, for every $t\ge0$,
\begin{align*}
E_x(Y^{n,1}_t-1)^--E_x(Y^{n,1}_0-1)^-
&\ge- E_x\int^t_0\fch_{\{Y^{n,1}_s\le 1\}}\,dY^{n,1}_s\\
&=E_x\int^t_0\fch_{\{Y^{n,1}_s\le 1\}} \{dB_s+h_n(X^A_s)g^1_n(Y^{n,1}_s)\,dA^{\sigma}_s\}.
\end{align*}
Hence
\[
E_x\int^t_0\fch_{\{Y^{n,1}_s\le 1\}}h_n(X^A_s)g_n(Y^{n,1}_s)\,dA^{\sigma}_s\le E_x(Y^{n,1}_t-1)^-,\quad t\ge0.
\]
Letting $t\rightarrow\infty$ yields
\begin{equation}
\label{eq3.13}
E_x\int^{\infty}_0\fch_{\{Y^{n,1}_s\le 1\}} h_n(X^A_s)g^1_n(Y^{n,1}_s)\,dA^{\sigma}_s\le 1.
\end{equation}
Set $\nu_n=\fch_{\{u^1_n\le1\}}h_ng^1_n(u^1_n)\cdot\sigma$. Then  $\nu_n\in S^{(0),\kappa}_0$ since $h_ng^1_n(u^1_n)$ is bounded. Furthermore,
since $1\in H^1(D)$, by (\ref{eq2.5}) we have
$\EE^{\kappa}(U^{\kappa}\nu_n,1)=\langle\nu_n,1\rangle$.
By Lemma \ref{lem3.5}, $G^{\kappa}\nu_n$ is a quasi continuous version of $U^{\kappa}\nu_n$, and  by Schwartz's inequality, $\EE(U^{\kappa}\nu_n,1)=0$ since
$\EE(1,1)=0$. Therefore
\[
\langle\nu_n,1\rangle=\EE^{\kappa}(U^{\kappa}\nu_n,1)=\langle\kappa,G^{\kappa}\nu_n\rangle.
\]
But $G^{\kappa}\nu_n=G^{\kappa}(\varphi_n\cdot\sigma)$ with
$\varphi_n=\fch_{\{u^1_n\le 1\}}h_ng^1_n(u^1_n)$, so   by (\ref{eq3.14})
and (\ref{eq3.13}),
\[
G^{\kappa}\nu_n(x)=E^A_x\int^{\infty}_0 \fch_{\{Y^{n,1}_s\le 1\}} h_n(X^A_s)g^1_n(Y^{n,1}_s)\,dA^{\sigma}_s\le 1.
\]
As a result we obtain
\[
\nu_n(\bar D)\le \kappa(\bar D)=\|\beta\|_{L^1(\partial D;\sigma)}.
\]
Since $u^1_n\nearrow u^1$, letting $n\rightarrow\infty$ and using Fatou's lemma
shows that $\fch_{\{u^1\le 1\}}hg^1(u^1)\in L^1(\partial D;\sigma)$, hence that $hg^1(u^1)\in L^1(\partial D;\sigma)$. From this, inequalities (\ref{eq3.7})
and the fact that $u_n\rightarrow u$ it follows that
$hg(u)\in L^1(\partial D;\sigma)$.
\end{proof}

\subsection{Equations with mixed nonlinearities}

We consider the problem
\begin{equation}
\label{eq3.15}
-L u=\mu\quad\mbox{in }D,\qquad-\frac{\partial u}{\partial{\gamma_a}}
+\beta\cdot u=h(g^1(u)+g^2(u))\quad \mbox{on }\partial D,
\end{equation}
with  $g^1,g^2$ satisfying (\ref{eq1.7}). The proof of the theorem below is a slight modification of the proof of \cite[Theorem 3.11]{K:JFA}.

\begin{theorem}
Assume that $\mu,h$ satisfy \mbox{\rm(\ref{eq1.2})}, $g^1,g^2:(0,\infty)\rightarrow[0,\infty)$ are continuous nonincreasing functions satisfying \mbox{\rm(\ref{eq1.7})}. Then there exsists a unique probabilistic solution of problem \mbox{\rm(\ref{eq3.15})}.
\end{theorem}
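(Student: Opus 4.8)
The plan is to set $\bar g:=g^1+g^2$, to regard \mbox{\rm(\ref{eq3.15})} as problem \mbox{\rm(\ref{eq1.1})} with $g$ replaced by $\bar g$, and to run the monotone approximation scheme from the proof of Theorem \ref{th3.4}, the only essential change occurring in the a priori estimate. Since $g^1,g^2$ are continuous and nonincreasing, so is $\bar g$, and uniqueness is then immediate from Proposition \ref{prop3.11} applied with $g_1=g_2=\bar g$: it gives $u_1\le u_2$ and $u_2\le u_1$ q.e.\ for any two probabilistic solutions, hence $u_1=u_2$ q.e.

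For existence I would regularize and truncate simultaneously: put $h_n=T_n(h)$, $g^i_n(y)=g^i(y+1/n)$ and $\bar g_n=g^1_n+g^2_n$, so that $h_n$ and $\bar g_n$ are bounded, $\bar g_n$ is continuous and nonincreasing, and $h_n\nearrow h$, $\bar g_n\nearrow\bar g$. By Proposition \ref{prop3.5} there is for each $n$ a probabilistic solution $u_n$ of \mbox{\rm(\ref{eq3.15})} with $h,\bar g$ replaced by $h_n,\bar g_n$. Exactly as in Theorem \ref{th3.4}, the lower bound $\bar g_n\ge g^1_n\ge c_1(\,\cdot\,+1/n)^{-\gamma_1}$ together with Lemmas \ref{lem3.6} and \ref{lem3.4} gives $u_n>0$ q.e.\ for all large $n$, and Proposition \ref{prop3.11} gives $u_n\le u_{n+1}$ q.e. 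I then set $u=\lim_n u_n$ and $w=G^\kappa\mu=E^A_\cdot B_\infty$, which is finite q.e.\ by Lemma \ref{lem3.6}.

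The heart of the matter, and the point where two distinct exponents force a departure from Theorem \ref{th3.4}, is the uniform bound $\sup_n u_n<\infty$ q.e. Because $\bar g$ satisfies no single estimate of the form \mbox{\rm(\ref{eq1.4})}, the pure power computation leading to \mbox{\rm(\ref{eq3.19})} is unavailable; instead I would run that computation with the exponent $\gamma:=\min(\gamma_1,\gamma_2)$ and split the resulting time integral. Writing $Y^n=u_n(X^A)$ and $Z^n_t=Y^n_t-\int_t^\infty dB_s$, It\^o's formula applied to $(Z^n)^{\gamma+1}$ gives, as in \mbox{\rm(\ref{eq3.19})},
\[
(u_n-w)^{\gamma+1}(x)\le(\gamma+1)E^A_x\int^{\zeta^A}_0(Z^n_s)^{\gamma}h_n(X^A_s)\bar g_n(Y^n_s)\,dA^\sigma_s.
\]
On $\{Y^n>1\}$ the choice $\gamma=\min(\gamma_1,\gamma_2)$ yields $(Z^n)^{\gamma}\bar g_n(Y^n)\le(Y^n)^{\gamma}\bar g_n(Y^n)\le 2c_2$, so this part is controlled by $2c_2(\gamma+1)G^\kappa(h\cdot\sigma)(x)$, which is finite q.e.; on $\{Y^n\le1\}$ one has $Z^n\le Y^n\le1$, and the It\^o--Tanaka argument with $(Y^n-1)^-$ from the proof of Theorem \ref{th3.6} (cf.\ \mbox{\rm(\ref{eq3.13})}) bounds $E^A_x\int_0^\infty\fch_{\{Y^n\le1\}}h_n\bar g_n(Y^n)\,dA^\sigma$ by $1$. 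Hence
\[
\sup_n(u_n-w)^{\gamma+1}(x)\le(\gamma+1)\big(2c_2G^\kappa(h\cdot\sigma)(x)+1\big)<\infty\quad\mbox{q.e.},
\]
so that $\sup_n u_n<\infty$ and $u<\infty$ q.e.

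With the uniform bound in hand I would conclude exactly as in Theorem \ref{th3.4}: using the stopping times $\tau_k=\inf\{t:X^A_t\notin\{u_1\ge1/k\}\}$, the fact that $u(X^A)$ is of class D (which the estimate above and the strong Markov property provide), and dominated convergence together with $\bar g_n(u_n)\to\bar g(u)$ q.e.\ (valid because $u>0$ q.e.), one passes to the limit in the probabilistic representation of $u_n$ to obtain \mbox{\rm(\ref{eq3.2})} with $g$ replaced by $\bar g$; thus $u$ is a probabilistic solution of \mbox{\rm(\ref{eq3.15})}. The same splitting of $\{u\le1\}$ and $\{u>1\}$, combined with Fatou's lemma, yields $h\bar g(u)=h(g^1(u)+g^2(u))\in L^1(\partial D;\sigma)$. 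The main obstacle throughout is precisely this a priori estimate: the incompatibility of the exponents $\gamma_1,\gamma_2$ rules out the single-power bound of Theorem \ref{th3.4}, and it is the combination of the power estimate at exponent $\min(\gamma_1,\gamma_2)$ on $\{u_n>1\}$ with the $L^1$ (It\^o--Tanaka) estimate on $\{u_n\le1\}$, available here because $\bar g$ is nonincreasing, that takes its place.
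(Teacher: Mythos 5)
Your argument is correct, but it takes a genuinely different route from the paper's. The paper (following the scheme of \cite[Theorem 3.11]{K:JFA}) does not prove any new a priori bound for the mixed problem: it introduces the auxiliary solutions $u^i_n$ of the two pure-power problems $(-L+\beta\cdot\sigma)u^i_n=\mu+c_1h_n(u^i_n+1/n)^{-\gamma_i}$, uses Proposition \ref{prop3.11} to get $u^i_n\le u_n$ and hence $u^1_n+u^2_n\le 2u_n$, deduces the domination $g^1(u_n+1/n)+g^2(u_n+1/n)\le c_22^{\gamma_1}(u^1_n+1/n)^{-\gamma_1}+c_22^{\gamma_2}(u^2_n+1/n)^{-\gamma_2}$, and then invokes the uniform integrability of each family $\{h_n(u^i_n+1/n)^{-\gamma_i}(X^A)\}$ with respect to $P^A_x\otimes dA^{\sigma}$ --- already established in the proof of Theorem \ref{th3.6} by the Scheff\'e-type argument following (\ref{eq3.8}) --- to pass to the limit directly in the integral representation (\ref{eq3.18}), with no stopping times and with finiteness of $u$ coming for free from the convergence of the integrals. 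You instead attack the incompatibility of the exponents head-on: the two-regime estimate (the power computation at exponent $\min(\gamma_1,\gamma_2)$ on $\{Y^n>1\}$, where $(Y^n)^{\min(\gamma_1,\gamma_2)}\le(Y^n)^{\gamma_i}$, combined with the It\^o--Tanaka $L^1$ bound of type (\ref{eq3.13}) on $\{Y^n\le1\}$) is sound and gives a pointwise bound $\sup_n(u_n-w)^{\min(\gamma_1,\gamma_2)+1}\le(\min(\gamma_1,\gamma_2)+1)(2c_2G^{\kappa}(h\cdot\sigma)+1)$ that the paper never states, after which the stopping-time/class-D limit passage of Theorem \ref{th3.4} goes through (with the minor extra observation that $h_n=T_n(h)\nearrow h$ is dominated on $F_k$ by $c_2(k^{\gamma_1}+k^{\gamma_2})h$, integrable against $E^A_x\int_0^{\cdot}dA^{\sigma}$). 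The paper's route is shorter because it recycles the uniform integrability machinery of Theorem \ref{th3.6}; yours is more self-contained and produces an explicit quantitative analogue of (\ref{eq3.19}) for the mixed problem, at the cost of redoing the class-D and stopping-time arguments.
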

\begin{proof}
Set $h_n=T_n(h)$, $g^1_n(y)=g(y+1/n)$, $g^2_n(y)=g^2(y+1/n)$, $y>0$. By Propositions \ref{eq3.11} and \ref{prop3.5}, there exists a unique probabilistic solution $u_n$ of  problem (\ref{eq3.15}) with $h,g^1,g^2$ replaced by $h_n,g^1_n,g^2_n$, i.e.
\begin{equation}
\label{eq3.18}
u_n(x)=E^A_xB_{\infty}+E^A_x\int^{\infty}_0
(h_n(u^1_n)(X^A_t)+h_n(u^2_n(X^A_t)))\,dA^{\sigma}_t
\end{equation}
for q.e. $x\in\bar D$. By Proposition \ref{prop3.11}, $u_n\le u_{n+1}$, $n\ge1$, q.e. Since $c_1y^{-\gamma_1}\le(g^1+g^2)(y)$ and  $c_1y^{-\gamma_2}\le(g^1+g^2)(y)$, applying Proposition \ref{prop3.11} we also get
\begin{equation}
\label{eq3.17}
v_n\le u_n,\quad w_n\le u_n,\quad n\ge1,\quad\mbox{q.e.}
\end{equation}
where $u^i_n$, $i=1,2$,  is the solution of the problem
\[
(-L+\beta\cdot\sigma)u^i_n=\mu+c_1h_n(u^i_n+1/n)^{-\gamma_i}.
\]
By (\ref{eq3.17}), $u^1_n+u^2_n\le 2u_n$, $n\ge1$, q.e. As in the proof of \cite[Theorem 3.11]{K:JFA},  from this inequality and (\ref{eq1.7}) we deduce that
\begin{equation}
\label{eq3.3}
g^1(u_n+1/n)+g^2(u_n+1/n)\le c_22^{\gamma_1}(u^1_n+1/n)^{-\gamma_1}+c_22^{\gamma_2}(u^2_n+1/n)^{-\gamma_2}.
\end{equation}
The argument following (\ref{eq3.8}) shows that  for $i=1,2$, the sequences $\{(u^i_n+1/n)^{-\gamma_i}\}$ are uniformly integrable with respect to the measure $P^A_x\otimes dA^{\sigma}$, which together with (\ref{eq3.3}) implies that $\{(h_ng^1_n(u^1_n)+h_ng^2_n(u^2_n))(X^A)\}$ is uniformly integrable with respect to $P^A_x\otimes dA^{\sigma}$. Therefore letting $n\rightarrow\infty$ in (\ref{eq3.18}) shows that $u$ defined by $u=\lim_{n\rightarrow\infty}u_n$ is a probabilistic solution of (\ref{eq3.15}).
\end{proof}

\section{Renormalized and weak solutions}

As in previous sections, we assume that $\mu$ is a smooth measure on $D$,   $h, g,\beta$ are nonnegative Borel measurable functions,  and we write $\kappa=\beta\cdot\sigma$.

\subsection{Renormalized solutions}

We begin with renormalized solutions. The following definition is an adaptation of \cite[Definition 3.4]{KR:NoD} to  problem (\ref{eq1.1}) which actually  is regarded as  problem (\ref{eq1.5}) with $\Delta_N$ replaced by the operator associated with the form $\EE$.

Let $\nu$ be a signed Borel measure on $\bar D$, and let $\nu^+$ and $\nu^-$ denote its  positive and negative part, respectively. We call it smooth if $|\nu|=\nu^++\nu^-$ is smooth.  We denote by  $\MM_{0,b}(\bar D)$ the set of all smooth signed  measures $\nu$ on $\bar D$, with respect to the form $\EE$, such that $\|\nu\|_{TV}:=|\nu|(\bar D)<\infty$. By \cite[Lemma 6.1.2]{FOT}, $\nu$ is smooth with respect to $\EE$ if and only if $\nu$  is smooth with respect to $\EE^{\kappa}$.

\begin{definition}
Assume that $\mu(D)<\infty$ and  $h\in L^{\infty}(\partial D;\sigma)$. A quasi continuous function $u$ defined q.e on $\bar D$ is a renormalized solution of (\ref{eq1.1}) if

\begin{enumerate}
\item[(a)] $u$ is quasi continuous, $u>0$ q.e. in $\bar D$, $hg(u)\in L^1(\partial D;\sigma)$ and $T_k(u)\in H^1(D)$ for every $k>0$;

\item[(b)] there exists a sequence $\{\nu_k\}\subset\MM_{0,b}(\bar D)$ such that $\lim_{k\rightarrow\infty}\|\nu_k\|_{TV}=0$ and for every $k>0$
	and every bounded quasi continuous version $\tilde v$ of  $v\in H^1(D)$,
\[
\EE^{\kappa}(T_k(u),v) =\langle \mu+hg(u)\cdot\sigma,\tilde
v\rangle +\langle\nu_{k},\tilde v\rangle.
\]
\end{enumerate}

Note that by \cite[Theorem 2.1.3]{FOT}, each $v\in H^1(D)$ has a quasi continuous modification.
\end{definition}

\begin{theorem}
\label{th4.2}
Under the assumptions of Theorem \ref{th3.6} there exists a renormalized solution of \mbox{\rm(\ref{eq1.1})} such that $hg(u)\in L^1(\partial D;\sigma)$. If moreover $g$ is nonincreasing, then there is a unique renormalized solution   and it is the unique probabilistic solution.
\end{theorem}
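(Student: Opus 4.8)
The plan is to derive everything from the probabilistic solution already constructed in Theorem \ref{th3.6}, together with the general equivalence, established in \cite{KR:NoD}, between probabilistic solutions and renormalized solutions of semilinear equations with a Dirichlet operator and smooth measure data. The decisive observation is that, once $u$ is fixed, the nonlinear term $hg(u)$ is merely an $L^1(\partial D;\sigma)$-function, so that when (\ref{eq1.1}) is regarded as the equation (\ref{eq1.5}) (with $\Delta_N$ replaced by the generator of $\EE$), its right-hand side becomes the \emph{frozen} bounded smooth measure
\[
\lambda:=\mu+hg(u)\cdot\sigma\in\MM_{0,b}(\bar D),\qquad
\|\lambda\|_{TV}=\mu(D)+\|hg(u)\|_{L^1(\partial D;\sigma)}<\infty,
\]
and the problem decouples into a linear one to which the abstract theory applies directly.

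For existence I would take the probabilistic solution $u$ furnished by Theorem \ref{th3.6}, for which $hg(u)\in L^1(\partial D;\sigma)$. By Proposition \ref{prop3.3} it is an integral solution, i.e. $u=G^{\kappa}\lambda$ is the potential of $\lambda$. Invoking the general result of \cite{KR:NoD} (adapted to the form $\EE^{\kappa}$, exactly as the definition of renormalized solution was adapted), that the potential of a bounded smooth measure is a renormalized solution of the associated Schr\"odinger-type equation, yields conditions (a) and (b) of the definition. Quasi continuity and $u>0$ q.e. are already part of the probabilistic solution, while $T_k(u)\in H^1(D)$ and the truncation estimate (\ref{eq1.8}) follow from the standard a priori bounds for renormalized solutions and the norm equivalence (\ref{eq2.1}); the defect measures $\nu_k$ with $\|\nu_k\|_{TV}\to0$ are produced by the same machinery.

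For uniqueness under the additional hypothesis that $g$ is nonincreasing, I would first establish the converse implication: every renormalized solution is a probabilistic solution. Indeed, if $v$ is a renormalized solution, then by (a) we have $hg(v)\in L^1(\partial D;\sigma)$, so $\lambda_v:=\mu+hg(v)\cdot\sigma\in\MM_{0,b}(\bar D)$, and the same equivalence from \cite{KR:NoD}, applied to the renormalized formulation (b) with data frozen at $\lambda_v$, forces $v=G^{\kappa}\lambda_v$; thus $v$ is an integral solution and, by Proposition \ref{prop3.3}, a probabilistic solution. Theorem \ref{th3.4} then guarantees that there is exactly one probabilistic solution, whence there is at most one renormalized solution, and by the existence part it coincides with the unique probabilistic solution.

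The main obstacle is the faithful transfer of the equivalence theorem of \cite{KR:NoD} to the present setting in \emph{both} directions. The only genuinely nonlinear feature is that the data measure depends on the solution through $g(u)$; the observation that removes this difficulty is that both the probabilistic and the renormalized notion of solution fix the \emph{same} trace function $hg(u)\in L^1(\partial D;\sigma)$. Once this function is frozen, the two formulations describe one and the same linear problem with data $\lambda$, and the abstract theory applies without modification after the passage (\ref{eq1.1})$\rightsquigarrow$(\ref{eq1.5}).
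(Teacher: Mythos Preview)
Your proposal is correct and follows essentially the same route as the paper: freeze the nonlinear term to obtain a bounded smooth measure $\lambda=\mu+hg(u)\cdot\sigma\in\MM_{0,b}(\bar D)$, then invoke the equivalence between probabilistic and renormalized solutions from \cite[Theorem 3.5]{KR:NoD} (applied to $\EE^{\kappa}$) in both directions, combined with Theorem \ref{th3.4} for uniqueness when $g$ is nonincreasing. The paper's proof is more terse but uses exactly these ingredients.
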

\begin{proof}
From Theorem \ref{th3.6} we know that there exists a probabilistic solution $u$ such that $hg(u)\in L^1(\partial D;\sigma)$. Since $\mu,\sigma$ are smooth bounded measures,  $\mu+hg(u)\cdot\sigma\in\MM_{0,b}(\bar D)$. Therefore $u$ is a renormalized solution by
\cite[Theorem 3.5]{KR:NoD} applied to the form $\EE^{\kappa}$. The second part follows from the first one, Theorem \ref{th3.4} and \cite[Theorem 3.5]{KR:NoD} again.
\end{proof}

The following regularity results follow directly from general results for equations with smooth measure data and operators associated with transient symmetric Dirichlet forms.

\begin{proposition}
\label{prop4.3}
Let the assumptions of Theorem \ref{th3.6} hold and let $u$ be a renormalized solution of \mbox{\rm(\ref{eq1.1})} such that $hg(u)\in L^1(\partial D;\sigma)$. Then,
for every $k>0$, $T_k(u)\in H^1(D)$ and \mbox{\rm(\ref{eq1.8})} is satisfied.
If, in addition, $\mu=0$, then $u^{(\gamma+1)/2}\in H^1(D)$ and \mbox{\rm(\ref{eq1.9})} is satisfied for some constant $c(\gamma)>0$.
\end{proposition}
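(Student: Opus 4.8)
The plan is to linearise the problem. Once $u$ is fixed, $\lambda:=\mu+hg(u)\cdot\sigma$ is a nonnegative measure in $\MM_{0,b}(\bar D)$, and $u$ is a renormalized solution of the linear equation with source $\lambda$ relative to the form $\EE^{\kappa}$. By the equivalence between renormalized, probabilistic and integral solutions for linear equations with smooth measure data (Proposition \ref{prop3.3} together with \cite[Theorem 3.5]{KR:NoD} applied to $\EE^{\kappa}$), $u$ admits the representation $u=G^{\kappa}\lambda$ q.e. Both (\ref{eq1.8}) and (\ref{eq1.9}) are then the standard a priori energy estimates for such solutions and, as announced, follow from the general results of \cite{KR:JFA,KR:NoD}; below I indicate the mechanism.

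For (\ref{eq1.8}) I approximate $\lambda$ from below by finite energy measures. By the $0$-order version of \cite[Theorem 2.2.4]{FOT} there is an $\EE^{\kappa}$-nest $\{F_j\}$ with $\lambda_j:=\fch_{F_j}\cdot\lambda\in S^{(0),\kappa}_0$; put $u_j:=G^{\kappa}\lambda_j=U^{\kappa}\lambda_j\in H^1(D)$ (Lemma \ref{lem3.5}). Since $\lambda_j\uparrow\lambda$, monotone convergence gives $u_j\uparrow u$ q.e. Testing the weak equation (\ref{eq2.5}) for $u_j$ with $v=T_k(u_j)\in H^1(D)$ and using that, as $u_j\ge0$, one has $\EE(u_j,T_k(u_j))=\EE(T_k(u_j),T_k(u_j))$ and $u_jT_k(u_j)\ge (T_k(u_j))^2$ on $\partial D$, I obtain
\[
\EE^{\kappa}(T_k(u_j),T_k(u_j))\le\EE^{\kappa}(u_j,T_k(u_j))=\langle\lambda_j,\widetilde{T_k(u_j)}\rangle\le k\,\lambda_j(\bar D)\le k\,\lambda(\bar D),
\]
because $0\le\widetilde{T_k(u_j)}\le k$ and $\lambda_j\ge0$. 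As $\lambda(\bar D)=\mu(D)+\|hg(u)\|_{L^1(\partial D;\sigma)}$, letting $j\to\infty$ and invoking the weak lower semicontinuity of $\EE^{\kappa}$ (recall that by (\ref{eq2.1}) the quantity $\EE^{\kappa}(\cdot,\cdot)^{1/2}$ is an equivalent norm on $H^1(D)$) yields $T_k(u)\in H^1(D)$ and (\ref{eq1.8}).

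For (\ref{eq1.9}) we have $\mu=0$, so $\lambda=hg(u)\cdot\sigma$ and $u_j=G^{\kappa}(\fch_{F_j}hg(u)\cdot\sigma)$, $u_j\uparrow u$ as above. Now I test the equation for $u_j$ with (a bounded regularisation of) $u_j^{\gamma}$. The chain rule gives $\EE(u_j,u_j^{\gamma})=\frac{4\gamma}{(\gamma+1)^2}\EE(u_j^{(\gamma+1)/2},u_j^{(\gamma+1)/2})$, while the boundary contributions $\int_{\partial D}\beta u_j^{\gamma+1}\,d\sigma$ to $\EE^{\kappa}(u_j,u_j^{\gamma})$ and to $\EE^{\kappa}(u_j^{(\gamma+1)/2},u_j^{(\gamma+1)/2})$ coincide; since $\frac{4\gamma}{(\gamma+1)^2}\le1$, this forces
\[
\frac{4\gamma}{(\gamma+1)^2}\,\EE^{\kappa}(u_j^{(\gamma+1)/2},u_j^{(\gamma+1)/2})\le\EE^{\kappa}(u_j,u_j^{\gamma}).
\]
For the right-hand side, $\EE^{\kappa}(u_j,u_j^{\gamma})=\langle\lambda_j,\widetilde{u_j^{\gamma}}\rangle\le\int_{\partial D}hg(u)\,u^{\gamma}\,d\sigma\le c_2\|h\|_{L^1(\partial D;\sigma)}$, where I used $u_j\le u$ (hence $u_j^{\gamma}\le u^{\gamma}$), $\fch_{F_j}\le1$, and the bound $g(u)u^{\gamma}\le c_2$ from (\ref{eq1.4}). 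Thus $\EE^{\kappa}(u_j^{(\gamma+1)/2},u_j^{(\gamma+1)/2})\le c(\gamma)\|h\|_{L^1(\partial D;\sigma)}$ with $c(\gamma)=c_2(\gamma+1)^2/(4\gamma)$, and passing to the limit $j\to\infty$ exactly as before gives $u^{(\gamma+1)/2}\in H^1(D)$ and (\ref{eq1.9}).

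The routine parts are the chain-rule computations and the identity (\ref{eq2.5}) for $u_j$. The main obstacle is the admissibility of $u_j^{\gamma}$ as a test function in the last step: $u_j=U^{\kappa}\lambda_j$ need not be bounded, and for $0<\gamma<1$ the map $y\mapsto y^{\gamma}$ is singular at the origin, so $u_j^{\gamma}$ need not lie in $H^1(D)$. One therefore replaces $u_j^{\gamma}$ by the bounded Lipschitz functions $(T_m(u_j)+\varepsilon)^{\gamma}-\varepsilon^{\gamma}\in H^1(D)\cap L^{\infty}$, runs the above computation for these, and removes the regularisation by letting $\varepsilon\downarrow0$ and $m\to\infty$, checking that the extra boundary terms (controlled by $\varepsilon\int_{\partial D}\beta(u_j+\varepsilon)^{\gamma}\,d\sigma$ and by the mass of $\lambda_j$ on $\{u_j\ge m\}$) vanish in the limit while the bound $g(u)\,(T_m(u_j)+\varepsilon)^{\gamma}\le c_2$ is preserved. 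The remaining care concerns the two monotone/weak limit passages, both of which rest on the weak lower semicontinuity of the form $\EE^{\kappa}$.
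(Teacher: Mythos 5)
Your argument is correct in substance, but it takes a different route from the paper: the paper's proof of Proposition \ref{prop4.3} is a two-line citation, deducing the first part from \cite[Proposition 5.9]{KR:JFA} and the second from \cite[Theorem 4.6]{K:JFA}, i.e.\ it treats (\ref{eq1.8}) and (\ref{eq1.9}) as instances of general a priori estimates already established for equations with smooth measure data and transient Dirichlet forms. What you do instead is unpack that machinery: you freeze the nonlinearity, view $u$ as the potential $G^{\kappa}\lambda$ of the bounded smooth measure $\lambda=\mu+hg(u)\cdot\sigma$ (via Proposition \ref{prop3.3} and the renormalized/probabilistic equivalence of \cite{KR:NoD}), approximate $\lambda$ by finite-energy pieces along a nest, and run the classical truncation tests $v=T_k(u_j)$ and $v=u_j^{\gamma}$ in the identity (\ref{eq2.5}), closing with weak lower semicontinuity through (\ref{eq2.1}). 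The computations check out: the reduction $\EE^{\kappa}(T_k(u_j),T_k(u_j))\le\EE^{\kappa}(u_j,T_k(u_j))$ uses exactly the positivity of $u_j$ and $\beta$; the chain-rule constant $4\gamma/(\gamma+1)^2\le1$ and the bound $g(u)u^{\gamma}\le c_2$ from (\ref{eq1.4}) give (\ref{eq1.9}) with the explicit constant $c(\gamma)=c_2(\gamma+1)^2/(4\gamma)$. What the paper's route buys is brevity and the full generality of the cited results; what yours buys is a self-contained proof with explicit constants and a transparent mechanism. The one place where your write-up is only a sketch is the admissibility of $u_j^{\gamma}$ as a test function; you correctly identify the issue and the standard fix $(T_m(u_j)+\varepsilon)^{\gamma}-\varepsilon^{\gamma}$, though a complete argument would still have to verify that the regularized boundary term dominates (up to the stated errors) the corresponding regularization of $\int_{\partial D}\beta\,u_j^{\gamma+1}\,d\sigma$ before passing to the limit in $\varepsilon$ and $m$. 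This is routine but not entirely free, and it is precisely the kind of bookkeeping the cited references carry out.
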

\begin{proof}
The first part follows from \cite[Proposition 5.9]{KR:JFA} and the second from \cite[Theorem 4.6]{K:JFA}.
\end{proof}

For $r>0$ we denote by $M^r(D;m)$ the Marcinkiewicz space of order $r$. It is the set of  measurable functions $v$ on $D$ such that the distribution function corresponding to  $m$, that is
the function $\lambda_m(t)=m(\{x\in D: |v(x)|\ge t\})$, $t>0$, satisfies  estimate of the form $\lambda_m(t)\le Ct^{-r}$ for some $C>0$. Similarly, by using the distribution function corresponding  to $\sigma$, we define the Marcinkiewicz space $M^r(\partial D;\sigma)$.

\begin{corollary}
Under the assumptions of Theorem \ref{th3.6},  $u$ of Proposition \ref{prop4.3} has the property that
$u\in M^{(d-1)/(d-2)}(\partial D;\sigma)$ and
$u\in M^{d/(d-2)}(D;m)$, $|\nabla u|\in M^{d/(d-1)}(D;m)$.
\end{corollary}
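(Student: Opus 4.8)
The plan is to derive both Marcinkiewicz estimates from the energy estimate (\ref{eq1.8}) together with the Sobolev and trace inequalities, following the standard technique used in \cite{BBGGPV}. The starting point is that by Proposition \ref{prop4.3} we have $T_k(u)\in H^1(D)$ for every $k>0$ and, by (\ref{eq1.8}) and the first inequality in (\ref{eq2.1}),
\[
c_1^2\|T_k(u)\|^2_{H^1(D)}\le\EE^{\kappa}(T_k(u),T_k(u))\le kC,
\]
where $C=\mu(D)+\|hg(u)\|_{L^1(\partial D;\sigma)}<\infty$. Thus $\|\nabla T_k(u)\|^2_{L^2(D;m)}\le C'k$ for a constant $C'$ independent of $k$.

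The first step is the estimate for $u$ in $M^{d/(d-2)}(D;m)$. Here I would use the Sobolev embedding $H^1(D)\hookrightarrow L^{2^*}(D;m)$ with $2^*=2d/(d-2)$, which gives $\|T_k(u)\|_{L^{2^*}(D;m)}\le c\|T_k(u)\|_{H^1(D)}\le c''k^{1/2}$. On the set $\{u\ge k\}$ one has $T_k(u)=k$, so
\[
m(\{u\ge k\})\,k^{2^*}\le\int_D|T_k(u)|^{2^*}\,dm\le c''' k^{2^*/2},
\]
whence $m(\{u\ge k\})\le c''' k^{2^*/2-2^*}=c''' k^{-2^*/2}=c''' k^{-d/(d-2)}$. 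This is precisely the claim $u\in M^{d/(d-2)}(D;m)$. For the gradient estimate $|\nabla u|\in M^{d/(d-1)}(D;m)$, I would split $\{|\nabla u|\ge t\}$ into $\{|\nabla u|\ge t\}\cap\{u<k\}$ and $\{u\ge k\}$, bound the first set via Chebyshev using $\int_{\{u<k\}}|\nabla u|^2\,dm=\|\nabla T_k(u)\|^2_{L^2}\le C'k$, bound the second by the result just obtained, and then optimize the free parameter $k$ as a function of $t$ (the choice $k\sim t^{?}$ balancing $C'k/t^2$ against $k^{-d/(d-2)}$), which yields the exponent $d/(d-1)$.

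The second step is the boundary estimate $u\in M^{(d-1)/(d-2)}(\partial D;\sigma)$, which is analogous but uses the trace theorem in its sharp form. The trace operator maps $H^1(D)$ continuously into $L^{q}(\partial D;\sigma)$ with the critical exponent $q=2(d-1)/(d-2)$ (the trace Sobolev exponent for a Lipschitz domain in $\BR^d$; see \cite{N}). Thus $\|T_k(u)\|_{L^{q}(\partial D;\sigma)}\le c\|T_k(u)\|_{H^1(D)}\le c''k^{1/2}$, and the same truncation argument as above gives
\[
\sigma(\{x\in\partial D:u(x)\ge k\})\,k^{q}\le c''k^{q/2},
\]
so $\sigma(\{u\ge k\})\le c k^{-q/2}=c k^{-(d-1)/(d-2)}$, which is the assertion.

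The main obstacle is less in the analytic estimates themselves, which are routine once (\ref{eq1.8}) is granted, than in making sure the truncation arguments are applied to a genuinely $\sigma$-measurable, quasi continuous representative of $u$ on $\partial D$ and that the trace of $T_k(u)$ agrees with $T_k$ of the trace, so that the boundary distribution function is controlled by the trace norm. Since $T_k(u)\in H^1(D)$ and truncation commutes with the trace for $H^1$ functions, this causes no real difficulty, but it is the point where the Lipschitz regularity of $\partial D$ and the precise form of the trace embedding must be invoked carefully. Once the two distributional bounds are in place, both Marcinkiewicz memberships follow directly from the definition of $M^r$ given before the statement, and no further work is required.
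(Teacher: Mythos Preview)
Your proposal is correct and follows essentially the same approach as the paper: the paper's proof consists of a single sentence invoking (\ref{eq1.8}), (\ref{eq2.1}) and then citing \cite[Lemmas 4.1, 4.2]{BBGGPV} and \cite[p.~11]{DOS}, and what you have written is precisely a reconstruction of the content of those references---the truncation-plus-Sobolev argument for $u$ and $\nabla u$ in the interior, and the analogous truncation-plus-trace argument for $u$ on $\partial D$. Your treatment of the gradient via the splitting $\{|\nabla u|\ge t\}\subset\{|\nabla u|\ge t,\,u<k\}\cup\{u\ge k\}$ with optimization in $k$ is exactly the method of \cite[Lemma 4.2]{BBGGPV}, and the boundary estimate via the critical trace exponent $q=2(d-1)/(d-2)$ is the point handled in \cite{DOS}.
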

\begin{proof}
Follows from (\ref{eq1.8}), (\ref{eq2.1}) and known results (see \cite[Lemmas 4.1, 4.2]{BBGGPV} and  \cite[p. 11]{DOS}).
\end{proof}

From Theorem \ref{th4.2} we know that for nonincreasing $g$ renormalized solutions coincide with  probabilistic solutions. As a simple  application of this fact and Proposition \ref{prop3.7} we get the following stability result.

\begin{proposition}
Assume that  $\mu=f\cdot m$, $\mu_n=f_n\cdot m$, $n\ge1$, for some $f,f_n\in L^1(D;m)$. Assume also that
$\beta,h,g$ satisfy the assumptions of Theorem \ref{th3.4} and $\|h\|_{L^1(\partial D;\sigma)}>0$. Let $u$ be the unique renormalized solution of \mbox{\rm(\ref{eq1.1})} and $u_n$ be the renormalized solution of \mbox{\rm(\ref{eq1.1})} with $\mu$ replaced by $\mu_n$. 	If $f_n\rightarrow f $ in $L^1(D;m)$ and $f_n\le F$ for some $F\in L^1(D;m)$. Then $u_n\rightarrow u$ q.e. in $\bar D$.
\end{proposition}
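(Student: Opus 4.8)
The plan is to exploit the fact that, since $g$ is nonincreasing, the renormalized and the probabilistic solutions coincide (Theorem \ref{th4.2}), and so both $u$ and $u_n$ admit the BSDE representation of Proposition \ref{prop3.7}. From this representation I would extract a pointwise a priori bound on $u_n-u$ by a Green potential of the data, and then show that this potential tends to $0$ q.e.

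First I would fix the representation. By Theorem \ref{th4.2}, $u$ and each $u_n$ are the unique probabilistic solutions of (\ref{eq1.1}) with data $\mu=f\cdot m$ and $\mu_n=f_n\cdot m$; by Proposition \ref{prop3.7} there are processes $M,M^n$, martingales under $P^A_x$ for q.e. $x$, such that $Y=u(X^A)$ and $Y^n=u_n(X^A)$ solve (\ref{eq3.4}) driven by $B=A^{f\cdot m}$ and $B^n=A^{f_n\cdot m}$, where $B^n_t-B_t=\int_0^t(f_n-f)(X^A_s)\,ds$. Writing $\delta Y=Y^n-Y$, subtracting the two equations and applying the It\^o--Tanaka formula to $|\delta Y|$ exactly as in the proof of Proposition \ref{prop3.11}, the boundary nonlinearity contributes a term proportional to $\mathrm{sgn}(\delta Y_s)\,h(X^A_s)\big(g(Y^n_s)-g(Y_s)\big)\,dA^{\sigma}_s$, which is nonpositive because $g$ is nonincreasing; the local-time term is nonnegative and the martingale part has zero expectation. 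Since $Y^n_{T\wedge\zeta^A},Y_{T\wedge\zeta^A}\to0$, taking $t=0$ and integrating against $P^A_x$ leaves only the contribution of $\delta B$, so that for q.e.\ $x\in\bar D$,
\[
|u_n(x)-u(x)|\le E^A_x\int_0^\infty|f_n-f|(X^A_s)\,ds=G^{\kappa}(|f_n-f|\cdot m)(x),
\]
the last equality by Lemma \ref{lem3.6} applied to the finite smooth measure $|f_n-f|\cdot m$.

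It remains to prove $G^{\kappa}(|f_n-f|\cdot m)\to0$ q.e., and this is exactly where the hypothesis $f_n\le F$ enters. Since $0\le|f_n-f|\le f+F=:G_0\in L^1(D;m)$ and $G_0\cdot m$ is a finite smooth measure, Lemma \ref{lem3.6} gives $\int_D G^{\kappa}(x,y)G_0(y)\,dy=G^{\kappa}(G_0\cdot m)(x)<\infty$ for q.e.\ $x$. For such $x$ and any $\delta>0$ I would split
\[
G^{\kappa}(|f_n-f|\cdot m)(x)\le\int_{\{|y-x|<\delta\}}G^{\kappa}(x,y)G_0(y)\,dy+\Big(\sup_{|y-x|\ge\delta}G^{\kappa}(x,y)\Big)\|f_n-f\|_{L^1(D;m)}.
\]
The first term is the tail of a convergent integral, hence tends to $0$ as $\delta\downarrow0$ uniformly in $n$; the second tends to $0$ as $n\to\infty$ for each fixed $\delta$, since $f_n\to f$ in $L^1$. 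Letting $n\to\infty$ and then $\delta\downarrow0$ gives $G^{\kappa}(|f_n-f|\cdot m)(x)\to0$ for q.e.\ $x$, and the a priori bound then yields $u_n\to u$ q.e.

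The main obstacle is this last step: upgrading $L^1$-convergence of the densities to quasi-everywhere convergence of their Green potentials along the \emph{full} sequence. The difficulty is the diagonal singularity of $G^{\kappa}(x,\cdot)$, which rules out a direct appeal to $\|f_n-f\|_{L^1}\to0$; the two-sided control $|f_n-f|\le G_0\in L^1$ is indispensable precisely because it furnishes an $n$-independent integrable majorant near the diagonal whose potential is finite q.e.\ by Lemma \ref{lem3.6}, while the boundedness of the resolvent density $G^{\kappa}$ off the diagonal handles the far part. I would stress that a purely subsequential argument (extracting $m$-a.e.\ convergent subsequences from $L^1$-convergence) would only deliver convergence along a subsequence, since q.e.\ convergence cannot be recovered from the subsequence criterion alone; the quantitative near/far estimate above is what secures convergence of the whole sequence.
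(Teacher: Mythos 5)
Your derivation of the key estimate is exactly the paper's: both routes pass through Theorem \ref{th4.2} and Proposition \ref{prop3.7}, apply the It\^o--Tanaka formula to $|Y^n-Y|$, use the monotonicity of $g$ to discard the boundary term and the class-D property to kill the terminal term, and arrive at the same pointwise bound $|u_n(x)-u(x)|\le G^{\kappa}(|f_n-f|\cdot m)(x)$ for q.e.\ $x$. The two arguments part ways only in the final limit passage, and it is there that yours has a gap: the near/far splitting requires $\sup_{|y-x|\ge\delta}G^{\kappa}(x,y)<\infty$, i.e.\ boundedness of the resolvent density off the diagonal. Nothing in the paper provides this. The kernel $G^{\kappa}$ is obtained abstractly via \cite[Lemma 4.2.4]{FOT} as a jointly measurable, symmetric function that is merely excessive in each variable; for a divergence form operator with bounded measurable coefficients on a Lipschitz domain, perturbed by the boundary measure $\beta\cdot\sigma$, an off-diagonal pointwise upper bound is a genuine Green function estimate that would have to be proved or cited. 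Without it the second term of your decomposition is not controlled, so as written the argument does not close.

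Moreover, the difficulty you invoke to justify this detour --- that a subsequential argument ``would only deliver convergence along a subsequence'' --- is not actually present. The bound $|u_n(x)-u(x)|\le G^{\kappa}(|f_n-f|\cdot m)(x)$ holds for all $n$ simultaneously outside a single exceptional set (a countable union of exceptional sets is exceptional), and $y\mapsto G^{\kappa}(x,y)(f+F)(y)$ is integrable for q.e.\ $x$ by Lemma \ref{lem3.6}. Fix such an $x$. Every subsequence of $(f_n)$ has a further subsequence converging to $f$ $m$-a.e., and along it dominated convergence with this majorant gives $G^{\kappa}(|f_{n_k}-f|\cdot m)(x)\to0$. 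Since this is now a statement about a sequence of real numbers at a fixed point, the subsequence principle yields convergence of the full sequence $G^{\kappa}(|f_n-f|\cdot m)(x)\to0$; the exceptional set never depends on the subsequence. This is in substance the paper's own conclusion (it simply invokes the dominated convergence theorem in (\ref{eq4.2})), and it shows your quantitative near/far estimate is both unnecessary and, as it stands, unsupported.
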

\begin{proof}	
By Proposition \ref{prop3.7}, for each $n\ge1$ there exists a process $M^n$ such that $M^n$ is a martingale under $P^A_x$ for q.e. $x\in\bar D$ and  the pair $(Y^n=u_n(X^A),M^n)$ is the solution of (\ref{eq3.4}) with $B$ replaced by $B^n_t=\int^t_0f_n(X^A_s)\,ds$, $t\ge0$. Hence
\[
Y^n_t=Y^n_0-\int^t_0f_n(X^A_s)\,ds-\int^t_0h(X^A_s)g(Y^n_s)\,dA^{\sigma}_s
+\int^t_0dM^n_s,\quad t\ge0.
\]	
Similarly, 	for $(Y=u(X^A),M)$ of Proposition \ref{prop3.7} we have
\[
Y_t=Y_0-\int^t_0f(X^A_s)\,ds-\int^t_0h(X^A_s)g(Y_s)\,dA^{\sigma}_s
+\int^t_0dM_s,\quad t\ge0.
\]	
Applying the It\^o--Tanaka formula we get
\[
|Y^n_t-Y_t|-|Y^n_0-Y_0|\ge\int^t_0\mbox{sgn}(Y^n_s-Y_s)\,d(Y^n_s-Y_s),\quad t\ge0,\quad P_x\mbox{-a.s.}
\]
for q.e. $x\in \bar D$. Hence
\begin{align*}
E_x|Y^n_0-Y_0|&\le E^A_x\int^{\infty}_0\mbox{sgn}(Y^n_s-Y_s)(f_n(X^A_s)-f(X^A_s))\,ds\\
&\quad+E^A_x\int^{\infty}_0\mbox{sgn}(Y^n_s-Y_s)
h(X^A_s)(g(Y^n_s)-g(Y_s))\,dA^{\sigma}_t.
\end{align*}
The second integral on the rigt-hand side of the above inequality is less then or equal to zero. Therefore
\begin{equation}
\label{eq4.2}
|u_n(x)-u(x)|\le E^A_x\int^{\infty}_0|f_n-f|(X^A_s)\,ds=G^{\kappa}|f_n-f|(x).
\end{equation}		
For q.e. $x\in\bar D$, $G^{\kappa}(x,y)|f_n-f|(y)\le 2G^{\kappa}(x,y)F(y)$, $y\in D$, and,
since $F\in L^1(D;m)$, $G^{\kappa}F(x)=R^AF(x)<\infty$ for q.e. $x\in\bar D$. Hence,  for   q.e. $x\in\bar D$, the function $D\ni y\mapsto G^{\kappa}(x,y)F(y)$ is integrable. Therefore letting $n\rightarrow\infty$ in (\ref{eq4.2}) and applying the dominated convergence theorem shows that $u_n(x)\rightarrow u(x)$ for q.e. $x\in\bar D$.
\end{proof}

\subsection{Weak solutions}

\begin{definition}
A function $u\in H^1(D)$ is a weak solution of (\ref{eq1.1}) if $u>0$ $m$-a.e. and
\begin{equation}
\label{eq4.1}
\EE^{\kappa}(u,v)=\langle\mu,v\rangle+\int_{\partial D}hg(u)v\,d\sigma,
\quad v\in H^1(D)\cap L^{\infty}(\partial D;\sigma).
\end{equation}
\end{definition}

\begin{proposition}
\label{prop4.5}
If $u$ is a probabilistic solution of \mbox{\rm(\ref{eq1.1})} such that $\nu\in S^{(0),\kappa}_0$, where $\nu=\mu+hg(u)\cdot\sigma$,  then $u$ is a weak solution.
\end{proposition}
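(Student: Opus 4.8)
The plan is to show that the integral identity defining a weak solution follows directly from the probabilistic/integral representation of $u$ together with the potential-theoretic identity (\ref{eq2.5}) characterizing potentials of measures in $S^{(0),\kappa}_0$. The key observation is that, by Proposition \ref{prop3.3}, a probabilistic solution is an integral solution, so $u$ satisfies (\ref{eq1.6}) q.e., which in our notation reads $u(x)=G^{\kappa}\nu(x)$ for q.e. $x\in\bar D$, with $\nu=\mu+hg(u)\cdot\sigma$. The hypothesis $\nu\in S^{(0),\kappa}_0$ is precisely what is needed to invoke the potential theory developed in Section 2.

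First I would record that, since $\nu\in S^{(0),\kappa}_0$, Lemma \ref{lem3.5} identifies $G^{\kappa}\nu$ as an $m$-version of the potential $U^{\kappa}\nu\in H^1(D)$. Because $u=G^{\kappa}\nu$ q.e. and $G^{\kappa}\nu$ is quasi continuous (being $0$-excessive), $u$ is the quasi continuous version of $U^{\kappa}\nu$; in particular $u\in H^1(D)$, which is the first requirement in the definition of a weak solution. The positivity $u>0$ $m$-a.e. is inherited from the probabilistic solution, where $u>0$ q.e. and exceptional sets are $m$-null.

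Next I would apply the defining property (\ref{eq2.5}) of the potential $U^{\kappa}\nu$: for every $v\in H^1(D)$,
\[
\EE^{\kappa}(U^{\kappa}\nu,v)=\langle\nu,\tilde v\rangle,
\]
where $\tilde v$ is a quasi continuous version of $v$. Substituting $u=U^{\kappa}\nu$ and unpacking $\nu=\mu+hg(u)\cdot\sigma$ gives
\[
\EE^{\kappa}(u,v)=\langle\mu,\tilde v\rangle+\int_{\partial D}hg(u)\,\tilde v\,d\sigma
\]
for all $v\in H^1(D)$. Restricting to $v\in H^1(D)\cap L^{\infty}(\partial D;\sigma)$ and noting that for such $v$ the pairing $\langle\mu,\tilde v\rangle$ coincides with $\langle\mu,v\rangle$ in the sense used in (\ref{eq4.1}), this is exactly the weak formulation (\ref{eq4.1}).

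The only point requiring genuine care is the interplay between quasi continuous representatives and the boundary integral: in (\ref{eq2.5}) the measure $\nu$ is paired with the quasi continuous version $\tilde v$, whereas (\ref{eq4.1}) writes the boundary term as an ordinary $\sigma$-integral of $hg(u)v$. The reconciliation uses that $\sigma$ charges no $\EE^{\kappa}$-exceptional set (it is smooth relative to $\EE^{\kappa}$), so $\tilde v=v$ holds $\sigma$-a.e. and the two boundary integrals agree; the same remark handles the $\mu$-term since $\mu$ is smooth. I expect this identification of representatives to be the main technical obstacle, but it is routine given the smoothness of $\mu$ and $\sigma$ established in Section 2.
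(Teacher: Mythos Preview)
Your proposal is correct and follows essentially the same route as the paper: invoke Proposition~\ref{prop3.3} to write $u=G^{\kappa}\nu$ q.e., use Lemma~\ref{lem3.5} to identify $G^{\kappa}\nu$ with the potential $U^{\kappa}\nu\in H^1(D)$, and then read off (\ref{eq4.1}) from (\ref{eq2.5}). Your additional remarks on positivity and on reconciling quasi continuous representatives with the boundary integral are correct elaborations that the paper leaves implicit.
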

\begin{proof}
By Proposition \ref{prop3.3}, $u=G^{\kappa}\nu$. Since $\nu\in S^{(0),\kappa}_0$,  the function $G^{\kappa}\nu$ is an $m$-version of the potential $U^{\kappa}\nu$.
Therefore $u\in H^1(D)$ and, by (\ref{eq2.5}), it satisfies (\ref{eq4.1}).
\end{proof}

Unfortunately, we are able to check that $\nu$ of Proposition \ref{prop4.5} is of class
$S^{(0),\kappa}_0$ only in some very special situations.

In the remainder of this subsection, we assume that $\mu(dx)=f\cdot m$ for some nonnegative Borel function on $D$.

\begin{proposition}
\label{prop4.6}
Let  the assumptions of Theorem \ref{th3.6} hold and let $u$ be a solution such that $u\ge\delta$ for some constant $\delta>0$.  If, in addition,
 $f\in L^p(D;m)$ with $p=2d/(d+2)$ and $h\in L^q(\partial D;\sigma)$ with $q=
2(d-1)/d$, then $u$ is a weak solution of \mbox{\rm(\ref{eq1.1})}.
\end{proposition}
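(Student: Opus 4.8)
The plan is to verify the hypothesis of Proposition \ref{prop4.5}, namely that $\nu=\mu+hg(u)\cdot\sigma$ belongs to $S^{(0),\kappa}_0$; once this is done the conclusion is immediate. Since $S^{(0),\kappa}_0$ is closed under addition, it suffices to treat the two summands $\mu=f\cdot m$ and $hg(u)\cdot\sigma$ separately. In each case I must produce a constant $c>0$ with $\langle\nu,v\rangle\le c\|v\|_{\kappa}$ for all $v\in D(\EE^{\kappa})\cap C(\bar D)$, and by \eqref{eq2.1} it is equivalent (up to the equivalence of norms) to bound $\langle\nu,v\rangle$ by a multiple of $\|v\|_{H^1(D)}$.

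First I would handle the interior term $f\cdot m$. By H\"older's inequality, $\langle f\cdot m,v\rangle=\int_Dfv\,dx\le\|f\|_{L^p(D)}\|v\|_{L^{p'}(D)}$ with $p'=2d/(d-2)$ the conjugate exponent of $p=2d/(d+2)$. Since $p'=2^*$ is exactly the Sobolev exponent for $H^1(D)$ in dimension $d$, the Sobolev embedding $H^1(D)\hookrightarrow L^{2^*}(D)$ gives $\|v\|_{L^{p'}(D)}\le C\|v\|_{H^1(D)}$, and hence $f\cdot m\in S^{(0),\kappa}_0$. For the boundary term I would argue similarly but with the trace theorem in place of the interior embedding. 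The crucial point is that the hypothesis $u\ge\delta>0$ together with \eqref{eq1.4} makes $g(u)$ bounded, since $g(u)\le c_2u^{-\gamma}\le c_2\delta^{-\gamma}$; therefore $hg(u)\le c_2\delta^{-\gamma}h$ and it suffices to show $h\cdot\sigma\in S^{(0),\kappa}_0$. Writing $\int_{\partial D}hv\,d\sigma\le\|h\|_{L^q(\partial D;\sigma)}\|v\|_{L^{q'}(\partial D;\sigma)}$ with $q'=2(d-1)/(d-2)$, I would invoke the sharp trace embedding $H^1(D)\hookrightarrow L^{2(d-1)/(d-2)}(\partial D;\sigma)$, valid on bounded Lipschitz domains, to bound $\|v\|_{L^{q'}(\partial D;\sigma)}\le C\|v\|_{H^1(D)}$. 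This yields $hg(u)\cdot\sigma\in S^{(0),\kappa}_0$, completing the verification.

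The main obstacle, and the point deserving the most care, is the precise exponent bookkeeping: one must check that the given exponents $p=2d/(d+2)$ and $q=2(d-1)/d$ are exactly the dual exponents to the critical Sobolev and trace exponents $2^*=2d/(d-2)$ and $2_*^{\partial}=2(d-1)/(d-2)$ in dimension $d\ge3$, so that H\"older pairs correctly with the embeddings. A direct computation confirms $(2d/(d+2))'=2d/(d-2)$ and $(2(d-1)/d)'=2(d-1)/(d-2)$, so the choices in the statement are sharp and no room is lost. The other point requiring attention is that the lower bound $u\ge\delta$ is genuinely needed: without it $g(u)$ could blow up near the boundary and $hg(u)$ would not be controlled by $h$ alone; the hypothesis $u\ge\delta$ is precisely what converts the singular nonlinearity into a bounded multiplier, after which the linear estimates above apply. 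I expect the remaining steps to be routine applications of Proposition \ref{prop4.5} and the norm equivalence \eqref{eq2.1}.
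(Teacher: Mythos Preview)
Your proposal is correct and follows essentially the same approach as the paper: reduce to Proposition \ref{prop4.5} by showing separately that $f\cdot m$ and $hg(u)\cdot\sigma$ lie in $S^{(0),\kappa}_0$, using H\"older together with the Sobolev embedding (for the interior term) and the trace embedding (for the boundary term), with the bound $g(u)\le c_2\delta^{-\gamma}$ coming from $u\ge\delta$ and \eqref{eq1.4}. Your added remarks on the exponent bookkeeping and the role of the lower bound $u\ge\delta$ are accurate and make the argument slightly more explicit than the paper's version.
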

\begin{proof}
By Proposition \ref{prop4.5}, it suffices to check that $f\cdot m,hg(u)\cdot\sigma $ are measures of finite 0-energy  integral. Write $p'=2d/(d-2)$, $q'=2(d-1)/(d-2)$.
By H\"older's inequality  and Sobolev's inequality, for $v\in H^1(D)\cap C(\bar D)$ we have
\[
\int_D|v|f\,dx\le \|f\|_{L^{p}(D;m)}\|v\|_{L^{p'}(D;m)},\quad
\|v\|_{L^{p'}(D;m)}\le c\|v\|_{H^1(D)}.
\]
Hence $f\cdot m\in S^{(0),\kappa}_0$. Furthermore, by H\"older's inequality and the  trace theorem (see, e.g., \cite[Chapter 2, Theorem 4.2]{N}, 
\[
\int_{\partial D}|v|h\,d\sigma\le
\|h\|_{L^{q}(\partial D;\sigma)}\|v\|_{L^{q'}(\partial D;\sigma)},\qquad \|v\|_{L^{q'}(\partial D;\sigma)}\le c\|v\|_{H^1(D)},
\]
which implies that $g(u)\cdot\sigma\in S^{(0),\kappa}_0$ since $g(u)\le c_2\delta^{-\gamma}$.
\end{proof}

\begin{remark}
\label{rem4.8}
The argument used in Proposition \ref{prop4.6}  to prove that $g(u)\cdot\sigma\in S^{(0),\kappa}_0$ does not take into account assumption (\ref{eq1.4}). Using (\ref{eq1.4}) one can show the existence of a weak solution under slightly weaker  integrability assumption imposed on $h$. Namely, as observed in \cite{DOS}, it suffices to assume that $h\in L^r(\partial D;\sigma)$ with $r=2(d-1)/(d+\gamma(d-2))$.
\end{remark}

We shall see that the condition  $u\ge\delta$ appearing in Proposition \ref{prop4.6} is satisfied in the case where $\BM^A$ strong Feller. A known probabilistic criterion for this to hold is given below.

\begin{lemma}
\label{lem4.9}
If
\begin{equation}
	\label{eq4.7}
	\lim_{t\downarrow0}\sup_{x\in\bar D}E_x(1-e^{-A_t})=0,
\end{equation}
then the semigroup $(P^A_t)_{t>0}$ is strong Feller.
\end{lemma}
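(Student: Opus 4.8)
The plan is to exploit the strong Feller property of the unperturbed semigroup $(P_t)$ together with the Markov property, peeling off a small initial time increment so that condition (\ref{eq4.7}) enters in exactly the right form. Recall that strong Feller means $P_\epsilon$ sends bounded Borel functions to bounded \emph{continuous} functions; the obstruction is that multiplying by the multiplicative functional $e^{-A_t}$ destroys this regularity, and the whole point is to recover it from a clean piece on which $(P_t)$ acts.

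First I would fix a bounded Borel function $f$, a time $t>0$ and a parameter $\epsilon\in(0,t)$. Using the additive property $A_t=A_\epsilon+A_{t-\epsilon}\circ\theta_\epsilon$ and the Markov property at time $\epsilon$, I would write
\[
P^A_tf(x)=E_x\big(e^{-A_\epsilon}P^A_{t-\epsilon}f(X_\epsilon)\big).
\]
Setting $g:=P^A_{t-\epsilon}f$, which is bounded Borel with $\|g\|_\infty\le\|f\|_\infty$ (since $e^{-A_{t-\epsilon}}\le1$ and $P^A_s$ preserves bounded Borel functions), and splitting $e^{-A_\epsilon}=1-(1-e^{-A_\epsilon})$, this becomes
\[
P^A_tf(x)=P_\epsilon g(x)-E_x\big[(1-e^{-A_\epsilon})g(X_\epsilon)\big].
\]
The first term $P_\epsilon g$ is continuous by the strong Feller property of $(P_t)$, while the second is dominated uniformly in $x$ by $\|f\|_\infty\sup_{y\in\bar D}E_y(1-e^{-A_\epsilon})$.

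The final step is to let $\epsilon\downarrow0$: by (\ref{eq4.7}) the remainder tends to $0$ uniformly in $x$, so $P^A_tf$ is a uniform limit of the continuous functions $x\mapsto P_\epsilon(P^A_{t-\epsilon}f)(x)$ and is therefore itself continuous, which is the strong Feller property of $(P^A_t)$. I expect the only real subtlety — and the reason the argument must be arranged this way — to be the following: one is tempted to use a Duhamel identity such as $P^A_tf=P_tf-E_x\int_0^tP^A_{t-s}f(X_s)\,dA_s$ and to bound the correction by $\|f\|_\infty\sup_xE_xA_\epsilon$, but (\ref{eq4.7}) does \emph{not} give smallness of $\sup_xE_xA_\epsilon$ (these Kato-type conditions genuinely differ when $A_\epsilon$ has a heavy tail, since $1-e^{-A_\epsilon}$ is bounded while $A_\epsilon$ need not be). Keeping the subprocess semigroup $P^A_{t-\epsilon}$ on the bulk of the interval and exposing only the factor $1-e^{-A_\epsilon}$ is precisely what makes condition (\ref{eq4.7}) applicable.
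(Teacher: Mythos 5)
Your proof is correct and is essentially the paper's own argument: both write $P^A_tf=P^A_\epsilon\bigl(P^A_{t-\epsilon}f\bigr)$, replace $P^A_\epsilon$ by $P_\epsilon$ acting on the bounded Borel function $P^A_{t-\epsilon}f$ (continuous by the strong Feller property of $(P_t)$), and bound the error uniformly by $\|f\|_\infty\sup_{x\in\bar D}E_x(1-e^{-A_\epsilon})$, which tends to $0$ by (\ref{eq4.7}), so $P^A_tf$ is a uniform limit of continuous functions. Your closing remark distinguishing this from a Duhamel-type bound via $\sup_xE_xA_\epsilon$ is a sensible aside but not needed for the proof.
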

\begin{proof}
For any bounded $f\in\BB^+(\bar D)$ and $s>0$,
\[
\sup_{x\in\bar D}|P^A_sf(x)-P_sf(x)|\le\|f\|_{\infty}\sup_{x\in\bar D}E_x(1-e^{-A_s}).
\]
Furthermore, for any $0\le s<t$, $P_sP^A_{t-s}f$ is continuous  (since $\BM$ is strong Feller) and
\begin{align*}
	\sup_{x\in\bar D}|P^A_tf(x)-P_sP^A_{t-s}f(x)|&=\sup_{x\in\bar D} |P^A_sP^A_{t-s}f(x)-P_sP^A_{t-s}f(x)|\\
	&\le\|f\|_{\infty}\sup_{x\in\bar D}|P^A_sf(x)-P_sf(x)|.
\end{align*}
from which the results follows.
\end{proof}

\begin{proposition}
\label{prop4.9}
Let \mbox{\rm(\ref{eq4.7})} hold. If $f\in\BB^+(D)$ and  $\|f\|_{L^1(D;m)}>0$, then there is $\delta>0$ such that  $R^Af(x)\ge\delta$ for all $x\in\bar D$.
\end{proposition}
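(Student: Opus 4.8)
The plan is to work with the $\alpha$-resolvent $R^A_\alpha f$ for a fixed $\alpha>0$ (where boundedness is automatic) and to pass to $R^A=R^A_0$ only at the very end. First I reduce to bounded $f$: since $G^\kappa\ge0$ the map $R^A$ is monotone, and $f\wedge n\uparrow f$ with $\|f\wedge n\|_{L^1(D;m)}\uparrow\|f\|_{L^1(D;m)}>0$, so replacing $f$ by $f\wedge n$ for $n$ large I may assume $0\le f\le C$. As $e^{-\alpha t}\le1$ we have $R^Af\ge R^A_\alpha f$, so it suffices to find, for some fixed $\alpha>0$, a constant $\delta>0$ with $R^A_\alpha f\ge\delta$ on $\bar D$. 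I obtain this by proving that $R^A_\alpha f$ is continuous and strictly positive on the compact set $\bar D$, whence it attains a strictly positive minimum.

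For the continuity, note first that $R^A_\alpha f\le\|f\|_\infty/\alpha$, so $R^A_\alpha f\in\BB_b(\bar D)$. Splitting the defining integral at a time $s>0$ and using the additivity of $A$ together with the Markov property yields
\[
R^A_\alpha f(x)=\int_0^s e^{-\alpha t}P^A_tf(x)\,dt+e^{-\alpha s}P^A_s(R^A_\alpha f)(x),\qquad x\in\bar D.
\]
By Lemma \ref{lem4.9} the semigroup $(P^A_t)$ is strong Feller; hence $P^A_s(R^A_\alpha f)$ is continuous, each $P^A_tf$ is continuous, and since $|e^{-\alpha t}P^A_tf|\le\|f\|_\infty e^{-\alpha t}$, dominated convergence shows that $x\mapsto\int_0^se^{-\alpha t}P^A_tf(x)\,dt$ is continuous too. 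Thus $R^A_\alpha f\in C(\bar D)$.

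The main obstacle is the strict positivity \emph{at every} point of $\bar D$. Set $N=\{x\in\bar D:R^A_\alpha f(x)=0\}$. Evaluating the displayed identity at a point of $N$ forces both (nonnegative) terms to vanish, so $E_x(e^{-A_s}R^A_\alpha f(X_s))=0$ and therefore $R^A_\alpha f(X_s)=0$ $P_x$-a.s., i.e. $p_s(x,N)=1$ for every $x\in N$ and every $s>0$. By the $m$-symmetry of the transition function this makes $\fch_N$ invariant, and since $\BM$ is irreducible (being strong Feller, as recalled in the proof of Lemma \ref{lem3.4}) we get $m(N)=0$ or $m(N)=m(\bar D)$. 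The second alternative is excluded because, by symmetry of $G^\kappa_\alpha$ and Fubini, $\int_{\bar D}R^A_\alpha f\,dm=\int_Df\,R^A_\alpha1\,dm>0$ (here $R^A_\alpha1>0$ everywhere and $\|f\|_{L^1(D;m)}>0$), so $m(N^c)>0$. Hence $m(N)=0$; but then the absolute continuity condition $p_s(x,\cdot)\ll m$ gives $p_s(x,N)=0$, contradicting $p_s(x,N)=1$ for $x\in N$ unless $N=\emptyset$. Therefore $R^A_\alpha f>0$ on all of $\bar D$.

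Being continuous and strictly positive on the compact set $\bar D$, $R^A_\alpha f$ attains a minimum $\delta>0$, and consequently $R^Af\ge R^A_\alpha f\ge\delta$ everywhere, as required. The delicate point throughout is the upgrade from positivity holding q.e./a.e.\ --- which the potential-theoretic results of the paper deliver directly --- to positivity holding pointwise on $\bar D$; this is exactly where hypothesis \mbox{\rm(\ref{eq4.7})} enters, through the strong Feller property of Lemma \ref{lem4.9} and the absolute continuity of the transition kernel.
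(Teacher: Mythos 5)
Your proof is correct, and its skeleton coincides with the paper's: truncate $f$ to a bounded $f\wedge n$ with nontrivial $L^1$-norm, observe $R^Af\ge R^A_\alpha(f\wedge n)$, obtain continuity of the $\alpha$-resolvent from the strong Feller property of $(P^A_t)$ (Lemma \ref{lem4.9}), establish strict positivity, and conclude by compactness of $\bar D$. Where you diverge is the positivity step. The paper simply invokes Lemma \ref{lem3.4} to get $R^A_1 f_k>0$ q.e.\ and then asserts that the minimum $\delta$ is positive; strictly speaking, a continuous function that is positive q.e.\ can still vanish at a point (singletons are polar for $d\ge 3$), so the paper is implicitly relying on the standard upgrade from q.e.\ positivity of an excessive function to everywhere positivity via the absolute continuity of $p_t(x,\cdot)$. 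Your argument makes exactly this upgrade explicit and, moreover, is self-contained: you show the zero set $N$ of $R^A_\alpha f$ satisfies $p_s(x,N)=1$ for $x\in N$, use $m$-symmetry to see that $N$ is $(P_t)$-invariant, exclude $m(N^c)=0$ by the Fubini/symmetry computation $\int R^A_\alpha f\,dm=\int f\,R^A_\alpha 1\,dm>0$, and then kill $N$ with $p_s(x,\cdot)\ll m$. This replaces the appeal to Lemma \ref{lem3.4} (which itself rests on irreducibility via the cited external result) by a direct invariant-set argument resting on the same two ingredients, irreducibility and absolute continuity, and it repairs the small gap in the paper's final inference. The only cosmetic points: the splitting of the integral at time $s$ is not needed for continuity (dominated convergence applies to the full integral $\int_0^\infty e^{-\alpha t}P^A_tf\,dt$ directly), and for $e^{-A_s}>0$ $P_x$-a.s.\ at \emph{every} $x$ one should note, as the paper does in Section 2, that $A=\beta\cdot A^\sigma$ with $A^\sigma$ a PCAF in the strict sense and $\beta$ bounded, so $A_s<\infty$ for all starting points.
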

\begin{proof}
Choose $k>0$ so that $\|f_k\|_{L^1(D;m)}>0$, where $f_k=f\wedge k$. By Lemma \ref{lem4.9}, $P^A_tf_k\in C_b(\bar D)$ for $t>0$, so
$R^A_1f_k$ is continuous on $\bar D$, and hence attains its minimum, say $\delta$, in $D$. On the other hand, from  Lemma \ref{lem3.4} it follows that  $R^A_1f_k>0$ q.e. in $\bar D$. Thus, in fact, $R^A_1f_k(x)\ge\delta$ for $x\in\bar D$. This proves the proposition since obviously $R^Af\ge R^A_1f_k\ge R^A_1f_k$.
\end{proof}

\begin{corollary}
\label{cor4.11}
Let $\beta$ satisfy  \mbox{\rm(\ref{eq1.3})},    $f\in\BB^+(D)$ and  $\|f\|_{L^1(D;m)}>0$, $f\in L^p(D;m)$ with $p=2d/(d+2)$,  $h\in L^r(\partial D;\sigma)$ with $r=2(d-1)/(d+\gamma(d-2))$ and $g$ be a continuous function satisfying \mbox{\rm(\ref{eq1.4})}. If $a$ is smooth, then  there exists a weak solution of \mbox{\rm(\ref{eq1.1})}.
\end{corollary}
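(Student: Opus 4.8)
The plan is to assemble three facts already established in this section and supply the one missing ingredient. The existence of a probabilistic solution comes from Theorem~\ref{th3.6}, the strict positivity $u\ge\delta$ comes from Proposition~\ref{prop4.9}, and the passage to a weak solution comes from Propositions~\ref{prop4.5} and~\ref{prop4.6} together with the refinement in Remark~\ref{rem4.8}. Both Proposition~\ref{prop4.9} and Lemma~\ref{lem4.9} take the Kato-type condition (\ref{eq4.7}) as their hypothesis, so the only genuinely new work is to \emph{deduce} (\ref{eq4.7}) from the assumption that $a$ is smooth; the remaining steps are bookkeeping.

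First I would verify (\ref{eq4.7}). Since $a$ is smooth and $D$ is bounded Lipschitz, the transition density $p_t(x,y)$ of $\BM$ (existing by \cite{FT1,FT2}) admits an Aronson-type Gaussian upper bound $p_s(x,y)\le Cs^{-d/2}\exp(-c|x-y|^2/s)$. Using the $(d-1)$-dimensional upper Ahlfors regularity of $\sigma$ (valid on a Lipschitz boundary), integration over $\partial D$ gives $\int_{\partial D}p_s(x,y)\,\sigma(dy)\le C's^{-1/2}$ uniformly in $x\in\bar D$. Because $\sigma$ is the Revuz measure of $A^\sigma$, this yields $\sup_{x\in\bar D}E_xA^\sigma_t=\sup_{x\in\bar D}\int^t_0\int_{\partial D}p_s(x,y)\,\sigma(dy)\,ds\le 2C'\sqrt t$. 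As $\beta\in L^\infty(\partial D;\sigma)$ we have $A_t\le\|\beta\|_\infty A^\sigma_t$, whence $\sup_{x}E_x(1-e^{-A_t})\le\sup_x E_xA_t\le\|\beta\|_\infty\sup_x E_xA^\sigma_t\to0$ as $t\downarrow0$, which is exactly (\ref{eq4.7}). By Lemma~\ref{lem4.9}, $(P^A_t)$ is then strong Feller.

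Next I would produce the solution and its lower bound. Since $f\in L^p(D;m)\subset L^1(D;m)$ with $\|f\|_{L^1(D;m)}>0$, the measure $\mu=f\cdot m$ satisfies $\mu(D)\in(0,\infty)$, and $h$ is a nonnegative element of $L^1(\partial D;\sigma)$; thus (\ref{eq1.2})--(\ref{eq1.4}) hold and Theorem~\ref{th3.6} provides a probabilistic solution $u$ with $hg(u)\in L^1(\partial D;\sigma)$ and $u>0$ q.e. By Proposition~\ref{prop3.3}, $u=G^\kappa\mu+G^\kappa(hg(u)\cdot\sigma)\ge G^\kappa\mu=G^\kappa(f\cdot m)=R^Af$ q.e., using (\ref{eq2.3})--(\ref{eq2.4}) and $hg(u)\ge0$. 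Since $\|f\|_{L^1(D;m)}>0$ and (\ref{eq4.7}) holds, Proposition~\ref{prop4.9} gives $\delta>0$ with $R^Af\ge\delta$ on $\bar D$, so $u\ge\delta$ q.e.

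Finally I would conclude that $u$ is a weak solution via Proposition~\ref{prop4.5}, for which it suffices to check $\nu:=\mu+hg(u)\cdot\sigma\in S^{(0),\kappa}_0$. The interior part $\mu=f\cdot m$ is treated exactly as in Proposition~\ref{prop4.6} from $f\in L^p(D;m)$ with $p=2d/(d+2)$ via Hölder's and Sobolev's inequalities. For the boundary part, the bound $g(u)\le c_2u^{-\gamma}\le c_2\delta^{-\gamma}$ (from (\ref{eq1.4}) and $u\ge\delta$) combined with the improved integrability argument of Remark~\ref{rem4.8} (following \cite{DOS}) shows $hg(u)\cdot\sigma\in S^{(0),\kappa}_0$ under the weaker hypothesis $h\in L^r(\partial D;\sigma)$, $r=2(d-1)/(d+\gamma(d-2))$. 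Hence $\nu\in S^{(0),\kappa}_0$ and Proposition~\ref{prop4.5} finishes the proof. I expect the main obstacle to be the verification of (\ref{eq4.7}): once the Gaussian heat-kernel estimate for the reflected diffusion is in hand (this is precisely where smoothness of $a$ enters), everything else follows from results already proved above.
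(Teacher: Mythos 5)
Your proposal is correct and follows essentially the same route as the paper: establish (\ref{eq4.7}), obtain the lower bound $u\ge\delta$ via Proposition \ref{prop4.9}, and conclude through Propositions \ref{prop4.5}--\ref{prop4.6} together with Remark \ref{rem4.8}. The only difference is that the paper gets (\ref{eq4.7}) by citing \cite[Proposition 6.1]{M}, whereas you sketch a direct derivation from Gaussian upper bounds for the reflected diffusion and the $(d-1)$-Ahlfors regularity of $\sigma$; that sketch is sound and is in substance what the cited result provides.
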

\begin{proof}
From  \cite[Proposition 6.1]{M} it follows that  (\ref{eq4.7}) is satisfied for bounded Lipschitz domains and smooth $a$ satisfying (\ref{eq2.7}). Therefore the desired result follows from Propositions \ref{prop4.6}, \ref{prop4.9}  and Remark \ref{rem4.8}.
\end{proof}

By applying Corollary \ref{cor4.11} to  $a=I_d$ (i.e. to $L=\Delta$) we get \cite[Theorem 2.2]{DOS}.

\end{document}